\documentclass[reqno]{amsart}
\usepackage{amssymb,amsmath,amsthm}
\usepackage{enumitem}
\usepackage{doi}
\usepackage{mathrsfs}

\makeatletter
\@namedef{subjclassname@2020}{\textup{2020} Mathematics Subject Classification}
\makeatother

\theoremstyle{plain}
\newtheorem{theorem}{Theorem}[section]
\newtheorem{lemma}[theorem]{Lemma}
\newtheorem{corollary}[theorem]{Corollary}
\newtheorem{proposition}[theorem]{Proposition}

\theoremstyle{definition}

\newtheorem{remark}[theorem]{Remark}

\newcommand{\F}{\mathbb F_2}
\newcommand{\D}{\mathcal D}
\newcommand{\U}{\mathcal U}
\newcommand{\W}{\mathcal W}
\newcommand{\LL}{\mathcal L}
\newcommand{\Id}{\operatorname{Id}}
\newcommand{\var}{\operatorname{var}}
\newcommand{\aug}{\operatorname{aug}}
\newcommand{\gr}{\operatorname{gr}}

\newcommand{\Mod}{\operatorname{Mod}}

\begin{document}

\title[An explicit limit variety of groups]{An explicit construction of a limit variety of groups}
\author{Zidong Gao}
\address{School of Mathematics, Northwest University, Xi'an, 710127, Shaanxi, P.R. China}
\email{zidonggao@yeah.net}
\author{Yilin Zhou}
\address{School of Mathematics, Northwest University, Xi'an, 710127, Shaanxi, P.R. China}
\email{zhou990619@163.com}

\subjclass[2020]{20E10}
\keywords{group variety, limit variety, finite identity basis, Specht property, bigroup, verbal subgroup.}

\begin{abstract}
We construct an explicit countable group
\(G_*=N(R)\rtimes A\)
whose generated variety is a limit variety of groups: it has no finite identity
basis, whereas every proper subvariety is finitely based. Here \(A\) is relatively
free in the variety of groups of nilpotency class at most two and exponent
dividing four, and \(R\) is an explicitly defined quotient of the group algebra
\(\mathbb F_2[A]\). The proof establishes an ascending chain condition for
coefficient spaces indexed by six ordered commutator entries and reduces every
proper subvariety to a hereditarily finitely based approximation. Non-finite
basability is obtained by transferring independent identities from the
constructions of Gupta and Krasilnikov. The resulting variety is contained in
\(\mathfrak B_4\mathfrak B_2\). This construction provides the first example of
such a variety in which both the generators and a basis are explicitly given.
\end{abstract}

\maketitle

\section{Introduction}\label{sec:introduction}
A variety of groups is \emph{finitely based} if its identities follow from a finite set of group identities. It is called a \emph{limit variety} if it is not finitely based but every proper subvariety is finitely based. A variety is \emph{Specht}, or \emph{hereditarily finitely based}, if all its subvarieties, including the variety itself, are finitely based. Thus a limit variety is a minimal obstruction to the Specht property.

Every non-finitely based group variety contains a minimal non-finitely based subvariety. Indeed, for a chain of subvarieties ordered by inclusion, their identity theories form a directed union, which is already an equational theory because each inference uses finitely many premises. If all members of a descending chain are non-finitely based, its intersection cannot have a finite basis: such a basis would lie at one stage and force that stage to equal the intersection. Zorn's lemma therefore applies to the non-finitely based subvarieties ordered by reverse inclusion. That argument does not specify a generating group or an identity system for the subvariety obtained. An explicit construction requires both a proof of non-finite basability and control of every additional identity.

Gupta and Krasilnikov developed methods for proving verbal Noetherianity and applied them to varieties of groups in \cite{GK99}. They subsequently constructed a limit variety in the signature of bigroups, namely split groups equipped with a distinguished projection \cite{GK01}. Krasilnikov constructed a non-finitely based variety of groups of exponent eight in \cite{K03}. The distinction between the group and bigroup signatures is essential: forgetting a distinguished projection does not, by itself, transfer either non-finite basability or the limit property.

In this paper we give a concrete group construction in the ordinary group signature. Write $\mathfrak B_n$ for the variety of groups of exponent dividing $n$. As usual, $\mathfrak V\mathfrak W$ denotes the variety of groups having a normal subgroup in $\mathfrak V$ with quotient in $\mathfrak W$.

The main result, stated in full at the end, identifies this explicit ordinary-group variety as a limit variety.

The proof uses the subvarieties $\U_k$ obtained by adjoining the identities
\[
[[x_1,x_2,x_3],y_1^2,\ldots,y_k^2,[z_1,z_2,z_3]]=1.
\]
We prove that every $\U_k$ is Specht and that every proper subvariety of $\U$ is contained in some $\U_k$. A separate argument proves that $\U$ itself is not finitely based. The Noetherian step keeps the six entries of the two triple commutators as ordered indices and uses lifts of actual group substitutions. Higman's lemma \cite{Higman} supplies the required well-quasi-order.

Section~\ref{sec:construction} gives the group and its coefficient algebra. Section~\ref{sec:ambient} proves the Specht property for finitely based ambient varieties containing the $\U_k$. Section~\ref{sec:bigroups} identifies the associated bigroup variety. Sections~\ref{sec:cofinal} and~\ref{sec:nonfinite} treat proper subvarieties and non-finite basability, respectively. Section~\ref{sec:mainproof} completes the proof of Theorem~\ref{main}.

\section{The group and its coefficient algebra}\label{sec:construction}

All commutators are left-normed, with $[x,y]=x^{-1}y^{-1}xy$ and $x^y=y^{-1}xy$. We write $G^\prime$ for the derived subgroup, $\gamma_i(G)$ for the lower central series, and $\mathfrak N_c$ for groups of class at most $c$. The notation $G^m$ denotes the subgroup generated by the $m$th powers. Put $\D=\mathfrak N_2\cap\mathfrak B_4$.

\begin{lemma}\label{freeD}
The relatively free $\D$-group on $a_1,a_2,\ldots$ has unique collected forms
\[
 a_1^{\epsilon_1}a_2^{\epsilon_2}\cdots
 \prod_i(a_i^2)^{\delta_i}\prod_{i<j}[a_i,a_j]^{\eta_{ij}},
 \qquad \epsilon_i,\delta_i,\eta_{ij}\in\{0,1\},
\]
with finite support. Its square subgroup is central elementary abelian on the displayed squares and commutators. Moreover $\D=\var(D_8)$, and $\D\D$ is locally finite.
\end{lemma}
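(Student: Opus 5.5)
Let $F$ be the relatively free $\D$-group on $a_1,a_2,\ldots$. I would prove the normal form by an upper bound from collection and a lower bound from an explicit model.

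\emph{Spanning.} In any group of class $2$, commutators are central and bilinear, and $(xy)^2=x^2y^2[y,x]$. Exponent $4$ gives $[x,y]^2=[x^2,y]=[x,y^2]$ with $(x^2)^2=1$. Hence $[x,y]^2=[x,y^2]=1$ follows from centrality of squares, which I check next. Indeed $[x^2,y]=[x,y]^2$, and $[x,y]^4=1$. For the stronger fact, expand $(xy)^4=x^4y^4[y,x]^{6}$ in class $2$. This gives $[y,x]^2=1$, hence $[x^2,y]=[x,y]^2=1$. So squares and commutators are central of order dividing $2$, and $\gamma_2F\cdot F^2$ is central elementary abelian. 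Collecting any word in the $a_i$, I would move letters into increasing index order, producing commutators. Then I reduce exponents modulo $2$ using $a_i^2$ and $a_i^{-1}=a_ia_i^2$. Expressions such as $[a_j,a_i]=[a_i,a_j]$ handle the ordering. Every element therefore has a displayed form, and the square subgroup is generated by the $a_i^2$ and $[a_i,a_j]$. It is central and elementary abelian, since $F^2\supseteq\gamma_2F$ via $[x,y]=x^{-2}(xy^{-1})^{2}y^{2}$ or a similar identity.

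\emph{Uniqueness.} This is the main step. Because only finitely many generators occur in any relation, it suffices to treat $n$ generators. I would build a concrete group $P_n$ on the set of tuples $(\epsilon,\delta,\eta)\in\F^n\times\F^n\times\F^{\binom n2}$. The multiplication is the standard central-extension cocycle: $\epsilon$ adds, and the central part $(\delta,\eta)$ is twisted by $\delta_i\mathrel{+}=\epsilon_i\epsilon'_i$ (carry) and $\eta_{ij}\mathrel{+}=\epsilon_j\epsilon'_i$ for $i<j$. Equivalently, $P_n$ is the central extension of $\F^n$ by $\F^{n+\binom n2}$ given by a bilinear cocycle, with $\epsilon\mapsto(\epsilon_i\epsilon_i)$ on the diagonal. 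A bilinear cocycle is associative, and $P_n$ has class $\le2$. Its squares are central of order $2$, so $P_n$ has exponent $4$. The images of the generators realize all $2^{2n+\binom n2}$ forms distinctly. By the universal property, $F_n\to P_n$ is surjective and the spanning bound forces $|F_n|\le 2^{2n+\binom n2}$, so it is an isomorphism and the forms are unique. The main obstacle is checking the cocycle computations, in particular that $a_i^2\mapsto e_{\delta_i}$ and $[a_i,a_j]\mapsto e_{\eta_{ij}}$ independently.

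\emph{$\D=\var(D_8)$ and local finiteness.} $D_8$ lies in $\D$, so $\var(D_8)\subseteq\D$. Conversely, I would embed $F_n$ in a direct power of $D_8$ by separating points with homomorphisms $F_n\to D_8$. For a nontrivial central element $c=\prod(a_i^2)^{\delta_i}\prod[a_i,a_j]^{\eta_{ij}}$, I send generators into $D_8=\langle r,s\rangle$, where $r^2$ is central and $[r,s]=r^2$. The squares of $r,rs,s$ are $r^2,1,1$ respectively. Take the map sending $a_i\mapsto r$ and all others to $1$ when some $\delta_i=1$. Otherwise some $\eta_{ij}=1$, and I send $a_i\mapsto s$ and $a_j\mapsto rs$, adjusting by the parity of the other terms. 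Since every nontrivial normal subgroup of $F_n$ meets the centre, which contains $\gamma_2F_n F_n^2$, this suffices: a quotient-separation argument on the finitely many elements yields $F_n\in\var(D_8)$. Finally, $\D$ is locally finite by the finite order of $F_n$. An extension of a locally finite group by a locally finite group is locally finite, which is a standard result via Schmidt. So $\D\D$ is locally finite.
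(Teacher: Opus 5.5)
Your argument is correct apart from one misstated step. The first two parts follow the paper. Squares are central because $(xy)^4=x^4y^4[y,x]^6$ forces $[y,x]^2=1$, collection gives existence, and uniqueness uses exactly the paper's bilinear-cocycle group on $\F^n\times\F^{n+\binom n2}$. You finish with an order count, while the paper reads off that no coordinate is identified.

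The later parts take a different route. For $\D=\var(D_8)$ the paper works with identities: it collects an identity of $D_8$ modulo $\D$ and uses one- and two-variable evaluations to force generator exponents $\equiv 0\pmod 4$ and even commutator exponents. You prove the dual statement, that $F_n$ is residually $D_8$. That needs one extra step, and you state it the wrong way round. Knowing that a nontrivial normal subgroup meets $Z(F_n)$, and that $Z(F_n)\supseteq F_n^2$, does not give a nontrivial element of the displayed central form inside it. What you need is $Z(F_n)\le F_n^2$. This holds for $n\ge2$, since the commutator pairing has trivial radical, but fails for $n=1$, where $F_1\cong\mathbb Z/4$ is abelian. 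The simplest repair is to separate non-central elements directly: if $\epsilon_i=1$, the map $a_i\mapsto r$, with all other generators $\mapsto 1$, sends the element to $r^{1+2\delta_i}\ne1$. Your ``adjusting by parity'' in the commutator case is unnecessary: under $a_i\mapsto s$, $a_j\mapsto rs$, others $\mapsto1$, every other displayed factor already dies.

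For local finiteness you cite Schmidt's theorem on extensions of locally finite groups. The paper instead gives a self-contained Schreier argument from the explicit bound $4^r2^{\binom r2}$, and also notes that $\D\D$ is a variety. Both are valid; the paper's version just avoids the external citation.
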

\begin{proof}
In a class-two group of exponent four, the fourth-power formula gives $[a,b]^6\allowbreak=1$; together with $[a,b]^4=1$ this gives $[a,b]^2=1$. Consequently all squares are central. Collection gives existence of the displayed forms. For uniqueness in rank $r$, take a vector space with coordinates $(v,z)$, where $v\in\F^r$ and $z$ has independent coordinates $q_i$ and $c_{ij}$, $i<j$. Set
\[
(v,z)(v',z')=(v+v',z+z'+\sum_i v_iv'_i q_i+
\sum_{i<j}v_jv'_i c_{ij}).
\]
The correction is bilinear, so this is an associative group of class at most two and exponent four. Its $i$th generator squares to $q_i$ and its $i,j$ commutator is $c_{ij}$. Thus none of the collected coordinates can be identified. Passing through finite ranks proves the assertion.

Every word modulo the identities of $\D$ collects into generator powers and pair commutator powers. If it is an identity of $D_8$, setting all but one variable equal to $1$ makes every generator exponent divisible by four. Setting all but a pair equal to $1$, and then using a noncommuting pair in $D_8$, makes every commutator exponent even. Hence it holds in every $\D$-group.

The same forms bound the order of an $r$-generated $\D$-group by $4^r2^{\binom r2}$. If $H$ is finitely generated with $K\lhd H$ and $K,H/K\in\D$, then $H/K$ is finite. The usual Schreier rewriting with a finite transversal supplies finitely many generators for $K$; hence $K$ is finite too. This proves the last assertion. The class of these extensions is a variety: it is defined by substituting words from the verbal subgroup defining the quotient variety into identities of the kernel variety. Thus containment of a generating group in $\D\D$ implies containment of its generated variety.
\end{proof}

Let $A$ be the relatively free countable-rank $\D$-group. In $\F[A]$ define
\[
t(a,b)=[a,b]+a^2b^2+a^2+b^2,\qquad
T=\langle t(a,b):a,b\in A\rangle,\quad R=\F[A]/T.
\]
\begin{lemma}\label{ringbasis}
Inversion induces an involutive anti-automorphism $\mu$ of $R$, and every endomorphism of $A$ acts on $R$. With $s_i=1+a_i^2$, one has
\begin{equation}\label{pbw}
R=\bigoplus_{S\subset\mathbb N\text{ finite}}a^S\mathscr S,
\qquad \mathscr S=\F[s_1,s_2,\ldots]/(s_i^2:i\ge1),
\end{equation}
where $a^S$ is the increasing product. The $s_i$ are central, and, for all $a,b\in A$,
\begin{equation}\label{ring-laws}
s(a)^2=0,\qquad s(ab)=s(a)+s(b),\qquad [a,b]=1+s(a)s(b),
\quad s(a)=1+a^2.
\end{equation}
In particular $(a,b,c)_{\rm ass}=0$ in $R$ for all $a,b,c\in A$.
\end{lemma}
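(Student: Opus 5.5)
We treat $T$ as the two-sided ideal of $\F[A]$ generated by the elements $t(a,b)$. The argument has three parts: invariance of $T$, the identities \eqref{ring-laws}, and the normal form \eqref{pbw}. The first two are direct calculations in $A$ using Lemma~\ref{freeD}. The third needs an explicit model of $R$.

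\emph{Invariance.} Any endomorphism $\varphi$ of $A$ extends linearly to an algebra endomorphism of $\F[A]$, and it sends $t(a,b)$ to $t(\varphi a,\varphi b)$. Hence $T$ is stable and $\varphi$ acts on $R$. Inversion extends to an involutive anti-automorphism of $\F[A]$. By Lemma~\ref{freeD}, squares and commutators in $A$ are central of order dividing two. Therefore $a^{-2}=a^2$, $[a,b]^{-1}=[a,b]$, and $(a^2b^2)^{-1}=a^2b^2$. So inversion fixes each $t(a,b)$, and since an anti-automorphism maps two-sided ideals to two-sided ideals, it preserves $T$. This gives $\mu$.

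\emph{The laws \eqref{ring-laws}.} Since $a^2$ is central with $a^4=1$, we get $s(a)^2=1+a^4=0$. Modulo $T$,
\[
[a,b]=a^2b^2+a^2+b^2=1+(1+a^2)(1+b^2)=1+s(a)s(b).
\]
In $A$ we have $(ab)^2=a^2b^2[b,a]=a^2b^2[a,b]$. Substituting the previous relation and using $a^4=b^4=1$ gives $(ab)^2=1+a^2+b^2$, that is, $s(ab)=s(a)+s(b)$. Each $s_i$ is central because $a_i^2$ is central in $A$. Every commutator of group elements becomes $1+s(a)s(b)$, which is central, so the stated associator vanishes. If $(\cdot,\cdot,\cdot)_{\rm ass}$ is the ordinary ring associator, this is automatic.

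\emph{Spanning in \eqref{pbw}.} Write each element of $A$ in the collected form of Lemma~\ref{freeD}. Then replace $a_i^2$ by $1+s_i$ and $[a_i,a_j]$ by $1+s_is_j$. This shows that $R$ is spanned by the elements $a^S\sigma$ with $\sigma\in\mathscr S$.

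\emph{Independence in \eqref{pbw}.} This is the main step. It suffices to work in finite rank $r$ and pass to the direct limit, as in Lemma~\ref{freeD}. Let $\mathscr S_r=\F[s_1,\dots,s_r]/(s_i^2)$. Let $E$ be the free $\mathscr S_r$-module on symbols $e_v$, $v\in\F^r$, with multiplication $e_ve_w=\beta(v,w)e_{v+w}$, where
\[
\beta(v,w)=\prod_i(1+s_i)^{v_iw_i}\prod_{i<j}(1+s_is_j)^{v_jw_i}.
\]
This $\beta$ is multiplicatively bimultiplicative in $v$ and $w$; this mirrors the bilinear correction used in Lemma~\ref{freeD}. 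Hence $\beta$ is a $2$-cocycle and $E$ is an associative twisted group algebra.

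Next consider the group $\{\lambda e_v\}$ with $\lambda$ a unit of $\mathscr S_r$. Its commutator subgroup lies in the central abelian group of units $1+\mathfrak m$, and every such unit squares to $1$. So the group generated by the $e_{\epsilon_i}$ has class at most two and exponent four. By freeness of $A$ there is a homomorphism $A\to E^\times$ with $a_i\mapsto e_{\epsilon_i}$.

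The remaining obstacle is to check that $t(a,b)\mapsto 0$ for \emph{all} $a,b\in A$, not only for generators. The plan is to define $s(x)=1+x^2$ for units $x=\lambda e_v$ of this form and prove directly from $\beta$ that $s(\lambda e_v)=\sum_i v_is_i$. This map is additive in $v$. Then the computation above, run in reverse, gives $[x,y]=1+s(x)s(y)$ in $E$, which is exactly $t(x,y)=0$. Hence $\F[A]\to E$ factors through $R$ and sends $a^S\sigma$ to $e_{1_S}\sigma$. Since these images are $\F$-independent in $E$, the spanning set is a basis, which proves \eqref{pbw}.
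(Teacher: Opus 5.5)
Your invariance argument, the laws \eqref{ring-laws}, and the spanning step are correct. For the laws you are more direct than the paper: you read $[a,b]=1+s(a)s(b)$ straight off the definition of $T$, then get additivity of $s$ from $(ab)^2=a^2b^2[a,b]$.

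For independence you package the argument differently. The paper stays inside $\F[A]=\bigoplus_S a^S\F[Q]$ with $Q=A^2$. Since $t(a_i,a_j)=z_{ij}+s_is_j$ with $z_{ij}=1+[a_i,a_j]$, the generator-level relations are an invertible change of generators of the central square-zero algebra $\F[Q]$. So the ideal $T_0$ they generate visibly has quotient \eqref{pbw}, and the remaining work is $T=T_0$, i.e.\ every $t(g,h)$ already lies in $T_0$.

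You instead build the twisted group algebra $E$ and map $R\to E$. In fact $E\cong\F[A_r]/T_0$ for the rank-$r$ relatively free $\D$-group $A_r$, so the two targets coincide. The paper gets associativity and independence for free from the group algebra. You must check the cocycle and the class/exponent conditions by hand, which you do correctly, in exchange for an explicit model.

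Both routes hinge on a computation you only announce as a plan: $\beta(v,v)=1+\sum_i v_is_i$. It holds by the paper's induction. Let $m$ exceed every index in the support of $v$, put $S=\sum_iv_is_i$, and assume $\beta(v,v)=1+S$. Then $\beta(v,\epsilon_m)=1$ and $\beta(\epsilon_m,v)=1+Ss_m$, so bimultiplicativity gives $\beta(v+\epsilon_m,v+\epsilon_m)=(1+S)(1+Ss_m)(1+s_m)=1+S+s_m$.

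The final claim is the one place where your argument as written does not work. The paper defines $(a,b,c)_{\rm ass}=abc-bac-cab+cba$ in Section~\ref{subsec:specht}. This is the ring commutator of $ab-ba$ with $c$, not the ordinary associator, so your fallback remark does not apply.

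This element does not vanish merely because group commutators are central. They are already central in $\F[A]$, where
\[
(a_1,a_2,a_3)_{\rm ass}=(1+[a_1,a_2])(1+[a_2a_1,a_3])\,a_3a_2a_1\neq0.
\]
What is needed is the specific form of the laws. From $ab-ba=ba\,s(a)s(b)$ one gets
\[
(a,b,c)_{\rm ass}=s(a)s(b)\,(ba\,c-c\,ba)=cba\,s(a)s(b)\,s(ba)\,s(c)=cba\,s(a)s(b)\bigl(s(a)+s(b)\bigr)s(c)=0.
\]
This uses $s(ba)=s(a)+s(b)$ and $s(\cdot)^2=0$, both of which you have already proved, so the repair is one line.
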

\begin{proof}
Every summand of $t(a,b)$ is a central involution in $A$, so inversion fixes $t(a,b)$ and preserves the two-sided ideal they generate. Inversion on a group algebra reverses products and squares to the identity, which proves the assertion about $\mu$. Endomorphism invariance follows by mapping $t(a,b)$ to $t(\theta(a),\theta(b))$.

Here is a proof of the basis assertion, also explaining the ring calculation used in GK01, Lemmas 3--8. Write $Q=A^2$, $z_{ij}=1+[a_i,a_j]$. Lemma~\ref{freeD} identifies
\[
\F[Q]=\F[s_i,z_{ij}:i<j]/(s_i^2,z_{ij}^2),\qquad
\F[A]=\bigoplus_S a^S\F[Q].
\]
Put $f_{ij}=z_{ij}+s_i s_j=t(a_i,a_j)$. Replacing each $z_{ij}$ by $f_{ij}$ is an invertible change of generators in this commutative square-zero polynomial algebra. Let $T_0$ be the ideal generated by all $f_{ij}$. It follows immediately that $\F[A]/T_0$ has the basis in \eqref{pbw}. We check $T=T_0$, so that no additional relations have been overlooked.

In $\F[A]/T_0$, a product $g=a_{i_1}\cdots a_{i_m}$ of distinct generators satisfies $g^2=1+\sum_{j=1}^m s_{i_j}$. Inductively, if $h$ is the first $m-1$ factors and $S=\sum_{j<m}s_{i_j}$, then
\[
\begin{aligned}
\relax[h,a_{i_m}]&=\prod_{j<m}(1+s_{i_j}s_{i_m})=1+S s_{i_m},\\
(ha_{i_m})^2&=(1+S)(1+s_{i_m})(1+S s_{i_m})=1+S+s_{i_m}.
\end{aligned}
\]
Multiplication by an element of $Q$ changes no square. Thus $s(g)$ is the sum of $s_i$ over the parity support of $g$ for every $g\in A$, proving $s(gh)=s(g)+s(h)$. The class-two power formula then gives
\[
[g,h]=(gh)^2g^2h^2=(1+s(g)+s(h))(1+s(g))(1+s(h))=1+s(g)s(h).
\]
Consequently every $t(g,h)$ vanishes modulo $T_0$, and $T=T_0$. This proves the basis and all the claimed square relations. Finally
\[
(a,b)_{\rm ass}=ba\,s(a)s(b),\qquad
(a,b,c)_{\rm ass}=cba\,s(a)s(b)(s(a)+s(b))s(c)=0.
\]
The augmentation map sends every generator of $T$ to zero and $1$ to $1$; thus it descends to $R$, which is nonzero.
\end{proof}

Put $P=(s_1,s_2,\ldots)\lhd R$. Define on $R^3$ the product
\begin{equation}\label{heis}
(u,v,c)(u',v',c')=(u+u',v+v',c+c'+u\mu(v')).
\end{equation}
\begin{lemma}\label{heisfacts}
This product defines a group $N(R)\in\D$. The formulas $(u,v,c)^a=(ua,va,c)$ define a right action of $A$ by automorphisms. The resulting group
\begin{equation}\label{groupstar}G_*=N(R)\rtimes A\end{equation}
is countable, belongs to $\D\D\cap\mathfrak B_4\mathfrak B_2$, and satisfies
\[
N(R)^2=N(R)'=Z(N(R))=\{(0,0,c):c\in R\}\le Z(G_*).
\]
Its generated variety is locally finite.
\end{lemma}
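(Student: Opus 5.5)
The plan is to verify the claims of Lemma~\ref{heisfacts} directly from the multiplication rule \eqref{heis}, using only that $R$ is an associative $\F$-algebra with the involutive anti-automorphism $\mu$ and the structure from Lemma~\ref{ringbasis}.

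\textbf{Group axioms for $N(R)$.} Associativity holds because both bracketings of a triple product have third coordinate $c+c'+c''+u\mu(v')+u\mu(v'')+u'\mu(v'')$, using additivity of $\mu$. The identity is $(0,0,0)$, and the inverse of $(u,v,c)$ is $(u,v,c+u\mu(v))$, since in characteristic $2$ we have $-u=u$.

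\textbf{Structure of $N(R)$.} A direct computation gives the commutator
\[
[(u,v,c),(u',v',c')]=(0,0,u\mu(v')+u'\mu(v)).
\]
So $N(R)'$ lies in $C=\{(0,0,c)\}$, which is visibly central. The square is
\[
(u,v,c)^2=(0,0,u\mu(v)),
\]
which also lies in $C$. Hence $N(R)$ has class at most two and exponent dividing four, so $N(R)\in\D$.

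For the reverse inclusions, take $u=c$ and $v=1$. Then $(c,1,0)^2=(0,0,c\mu(1))=(0,0,c)$, so $C\le N(R)^2$. Likewise $[(c,0,0),(0,1,0)]=(0,0,c)$, so $C\le N(R)'$. For the center, if $(u,v,c)$ is central, then pairing it with $(0,1,0)$ forces $u=0$, and pairing it with $(1,0,0)$ forces $\mu(v)=0$, hence $v=0$. Thus $N(R)^2=N(R)'=Z(N(R))=C$. Note that the class-two fourth-power identity is not needed here, because squares are computed explicitly.

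\textbf{The action of $A$.} The map $(u,v,c)\mapsto(ua,va,c)$ is a homomorphism exactly when
\[
ua\,\mu(v'a)=u\mu(v')
\]
for all $u,v'\in R$ and $a\in A$. Since $\mu$ is an anti-automorphism induced by inversion, $\mu(v'a)=\mu(a)\mu(v')=a^{-1}\mu(v')$. This gives $ua\,a^{-1}\mu(v')=u\mu(v')$, as required. It is a right action because $(ua)b=u(ab)$, and each map is bijective with inverse given by $a^{-1}$. The action fixes $C$ pointwise, so $C\le Z(G_*)$, since $C$ is already central in $N(R)$.

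\textbf{Countability and variety membership.} $A$ is countable, so $\F[A]$, $R$ and $N(R)$ are countable, and hence so is $G_*$. Since $N(R)\lhd G_*$ with $N(R)\in\D$ and $G_*/N(R)\cong A\in\D$, we get $G_*\in\D\D$. As $\D\subseteq\mathfrak B_4$ and $\D\subseteq\mathfrak B_2$ fails, the second membership needs a separate step. The plan is to check that $\mathfrak B_4\mathfrak B_2$ holds with normal subgroup $K=N(R)A^2$, which has exponent $4$ quotient $G_*/K\cong A/A^2$, an elementary abelian group, hence in $\mathfrak B_2$.

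The step I expect to be the main obstacle is showing that $K$ has exponent dividing $4$. I would first try to prove that $K$ is nilpotent of class at most two, which reduces the question to checking generators and commutators. The key ingredient is that elements $a\in A^2$ satisfy $a=1+s(a)$ in $R$ with $s(a)^2=0$, so $a$ acts on $R$ as a unipotent map $1+s$. For $n\in N(R)$ and $q\in A^2$, a direct calculation should give
\[
(nq)^2=n\,n^{q^{-1}}q^2,
\]
and then $(nq)^4$ reduces, via $s^2=0$ and characteristic $2$, to the identity. If this direct route stalls, the fallback is to use Lemma~\ref{freeD}-style collection inside the finitely generated subgroups of $K$.

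\textbf{Local finiteness.} This follows from $\D\D$ being locally finite (Lemma~\ref{freeD}), together with the fact established there that $\D\D$ is a variety. Hence $\var(G_*)\subseteq\D\D$, and $\var(G_*)$ is locally finite.
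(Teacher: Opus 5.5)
Apart from the $\mathfrak B_4\mathfrak B_2$ step, your proposal is correct and follows the paper's proof essentially line by line. This covers the explicit square and commutator formulas, the witnesses $(c,1,0)$ and $[(c,0,0),(0,1,0)]$, the centralizers of $(0,1,0)$ and $(1,0,0)$, the identity $ua\,\mu(v'a)=u\mu(v')$, and local finiteness from Lemma~\ref{freeD}. You also pick the same subgroup $K=N(R)A^2$ as the paper.

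The trouble is the strategy you commit to first for $\exp K\mid 4$: $K$ is not of class two, and in fact is not nilpotent. For $n=(u,v,c)$ and $q\in A^2$ (so $q^2=1$ and $\mu(q)=q$),
\[
[n,q]=n^{-1}n^{q}=\bigl(u(1+q),\,v(1+q),\,u(1+q)\mu(v)\bigr).
\]
Hence $[(1,0,0),a_1^2]=(s_1,0,0)$, whose commutator with $(0,1,0)$ is $(0,0,s_1)\neq(0,0,0)$. Iterating gives $[(1,0,0),a_1^2,\ldots,a_m^2]=(s_1\cdots s_m,0,0)\neq(0,0,0)$ by \eqref{pbw}. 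Already the three-generator subgroup $\langle(1,0,0),(0,1,0),a_1^2\rangle$ has class at least three, so your fallback via Lemma~\ref{freeD}-style class-two collection is unavailable too.

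Your stated key ingredient is also off. For $q\in A^2$ one has $s(q)=1+q^2=0$, so $q=1+s(q)$ is false. The correct fact is $(1+q)^2=1+q^2=0$, with $1+q=s(b)$ when $q=b^2$.

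The direct calculation you mention is exactly the paper's argument and does close the step, but it has to be carried out rather than asserted. Since $q^2=1$,
\[
(nq)^2=n\,n^{q}=\bigl(u(1+q),\,v(1+q),\,u\mu(vq)\bigr),
\]
and squaring this element gives
\[
(nq)^4=\bigl(0,0,\,u(1+q)\mu(1+q)\mu(v)\bigr)=\bigl(0,0,\,u(1+q)^2\mu(v)\bigr)=(0,0,0).
\]
This uses that $\mu$ is an anti-automorphism with $\mu(1+q)=1+q^{-1}=1+q$. Every element of $K$ has the form $nq$, so $K$ has exponent dividing four. Since $G_*/K\cong A/A^2$ is elementary abelian, $G_*\in\mathfrak B_4\mathfrak B_2$.
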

\begin{proof}
For three factors the central coordinate in either associative bracketing is
$c+c'+c''+u\mu(v')+u\mu(v'')+u'\mu(v'')$. The identity is $(0,0,0)$ and the inverse of $(u,v,c)$ is $(u,v,c+u\mu(v))$. Therefore
\[
(u,v,c)^2=(0,0,u\mu(v)),\quad
[(u,v,c),(u',v',c')]=(0,0,u\mu(v')+u'\mu(v)).
\]
These formulas give class at most two and exponent dividing four. Every $(0,0,c)$ is both the square of $(c,1,0)$ and the commutator of $(c,0,0)$ with $(0,1,0)$. Centralizing $(0,1,0)$ forces $u=0$; centralizing $(1,0,0)$ forces $v=0$. This proves the equality of the three subgroups.

The action law follows from right multiplication in $R$. It preserves the product because $(ua)\mu(v'a)=ua\mu(a)\mu(v')=u\mu(v')$, and it fixes the central coordinate subgroup pointwise. Both factors are in $\D$. For $q\in A^2$, the square of $nq$, with $n=(u,v,c)$, lies in $N(R)$ and has first coordinates $u(1+q),v(1+q)$. Its square is trivial because $\mu(q)=q$ and $(1+q)^2=0$. Hence $N(R)\rtimes A^2$ is a normal subgroup of exponent dividing four with elementary abelian quotient. Countability follows from finite-support words and sums; local finiteness follows from Lemma~\ref{freeD} and membership in $\D\D$.
\end{proof}

Write $\U=\var_{\rm grp}(G_*)$ and
\[
b_k=[[x_1,x_2,x_3],y_1^2,\ldots,y_k^2,[z_1,z_2,z_3]],\qquad
\U_k=\U\cap\Mod(b_k=1).
\]
Here $k\ge0$; $b_0=1$ says that $\gamma_3$ is abelian, since conjugates of triple values are again triple values and these generate $\gamma_3$. The notation $\Id(\mathcal V)$ means the full group identity theory of $\mathcal V$.

\section{Hereditarily finitely based ambient varieties}\label{sec:ambient}
\subsection{Structural identities}
For group words set
\[
t(\mathbf x)=[x_1,x_2,x_3],\qquad
r(h;a,b)=h^{[a,b]}h^{a^2b^2}(h^{a^2})^{-1}(h^{b^2})^{-1}.
\]
For $k\ge1$, let $\W_k$ be defined by the following six identities, with all displayed variable lists independent:
\begin{align}
x^8&=1,\label{W1}\\
[t(\mathbf x)^2,z]&=1,\label{W2}\\
[[t(\mathbf x),t(\mathbf y)],z]&=1,\label{W3}\\
[[x^4,t(\mathbf y)],z]&=1,\label{W4}\\
[r(t(\mathbf x);a,b),z]&=1,\label{W5}\\
b_k&=1.\label{W6}
\end{align}
We shall prove in Theorem~\ref{Wspecht} that $\W_k$ is Specht. The finite defining system will convert finite relative identity bases into absolute finite bases.

\begin{lemma}\label{structure}
Let $G\in\W_k$ be countably generated and let $\Gamma=\gamma_3(G)$. Then
\[
\Gamma^2\le Z(G),\quad \Gamma'\le Z(G),\quad \exp\Gamma'\le2.
\]
Moreover $M=\Gamma/(\Gamma\cap Z(G))$ is a right module over a quotient of $R$; equivalently it is an $\F[A]$-module annihilated by $T$, after choosing an epimorphism from the free $\D$-group onto the acting group.
\end{lemma}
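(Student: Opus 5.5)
We read the three containments directly off the defining identities of $\W_k$, and then build the module structure from conjugation. First note that $\Gamma=\gamma_3(G)$ is generated by the values $t(\mathbf g)$ of the triple commutator word, since conjugates of such values are again such values.

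The containments come in three steps.

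\emph{Centrality of $\Gamma'$.} By \eqref{W3}, every commutator $[t(\mathbf g),t(\mathbf h)]$ of generators of $\Gamma$ is central. A group generated by a set whose pairwise commutators are central has derived subgroup generated by those commutators. Hence $\Gamma'\le Z(G)$, so $\Gamma$ has class at most two.

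\emph{Centrality of $\Gamma^2$.} By \eqref{W2}, squares of generators are central. In a class-two group we have $(uv)^2=u^2v^2[v,u]$. So every square in $\Gamma$ is a product of central squares of generators and central commutators, and therefore $\Gamma^2\le Z(G)$.

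\emph{Exponent of $\Gamma'$.} Since $u^2$ is central, $1=[u^2,v]=[u,v]^2$ by bilinearity in class two. Thus $\Gamma'$ is generated by central involutions and $\exp\Gamma'\le2$.

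It follows that $M=\Gamma/(\Gamma\cap Z(G))$ is an elementary abelian $2$-group. Indeed, $M$ is abelian and of exponent $2$ because both $\Gamma'$ and $\Gamma^2$ lie in $\Gamma\cap Z(G)$. We write $M$ additively as an $\F$-vector space, with $G$ acting on the right by conjugation.

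Next we identify the acting group. Let $C=C_G(M)$ be the kernel of the action; we want $G/C\in\D$. By \eqref{W4}, $x^4$ centralizes $t(\mathbf y)$ modulo the centre, so $G^4\le C$. For class two we need $\gamma_3(G)\le C$, i.e.\ $[t(\mathbf y),t(\mathbf x)]\in Z(G)$, which is \eqref{W3} again. Hence $G/C$ satisfies $x^4=1$ and is nilpotent of class at most two, so $G/C\in\D$.

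Because $G$ is countably generated, $G/C$ is countably generated. We then choose an epimorphism from the free $\D$-group $A$ onto $G/C$; this makes $M$ a right $\F[A]$-module.

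It remains to see that $T$ annihilates $M$. The identity \eqref{W5} says that for every $h=t(\mathbf x)$ and all $a,b\in G$ the element $r(h;a,b)$ is central. Read additively in $M$, this says $\bar h\cdot(a,b)=0$, since $-1=+1$ in characteristic two, where
\[
(a,b)=[a,b]+a^2b^2+a^2+b^2=t(a,b).
\]
This holds for the images of the generators $h$, which span $M$. Each generator $t(a,b)$ of $T$ is obtained by substituting group elements $a,b$, and any elements of $A$ lift to elements of $G$. Hence $M\cdot t(a,b)=0$ for all $a,b\in A$.

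Because $M$ is a right module, right annihilation by each $t(a,b)$ alone does not give annihilation by the two-sided ideal $T$. We need $M\cdot u\,t(a,b)\,v=0$ for $u,v\in\F[A]$. Since $M$ is spanned by images of values $t(\mathbf x)$, and $M\cdot u$ is again spanned by such images (conjugates of values are values), it suffices to handle $M\cdot t(a,b)\,v=0\cdot v=0$ and $(M u)\cdot t(a,b)=0$. Thus the annihilator of $M$ is a two-sided ideal containing $T$, and $M$ is an $R$-module, or more precisely a module over the quotient of $R$ by this annihilator.

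The main obstacle is this final step. We must check that the word $r$ really encodes $t(a,b)$ under the right-action and sign conventions, including inverses, and that annihilation by the generators upgrades to annihilation by the two-sided ideal. This is where we would be careful.
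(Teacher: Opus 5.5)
Your proposal is correct and follows the paper's proof essentially step for step. Identities \eqref{W2}--\eqref{W3} give class two with $\Gamma'$ and $\Gamma^2$ central, and $[u^2,v]=[u,v]^2$ gives exponent two. Identity \eqref{W4}, together with centrality of $\Gamma'$, makes the action on $M$ factor through a $\D$-quotient (the paper uses $G/(G^4\Gamma)$, you use $G/C_G(M)$; this is equivalent). Finally, \eqref{W5} read additively kills each triple value by each generator $t(a,b)$, and this extends to all of $T$ by conjugation and addition.

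Your closing worry about the two-sided ideal is harmless, and in fact unnecessary. The annihilator of the whole right module $M$ is automatically a two-sided ideal, since $Mu\subseteq M$ for every $u$. So $M\cdot t(a,b)=0$ for all $a,b\in A$ already gives $MT=0$.
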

\begin{proof}
The values of $t$ and their conjugates generate $\Gamma$. Equations \eqref{W2}--\eqref{W3} first give central commutators between any two such values. Thus $\Gamma$ has class at most two. For two such values $u,v$, the identity $[u^2,v]=[u,v]^2$ gives exponent two for their commutator. Collecting a product of these values proves the assertions about all of $\Gamma$.

Conjugation on $M$ factors through $G/(G^4\Gamma)$, by \eqref{W4} and centrality of $\Gamma'$. This quotient has class at most two and exponent at most four. Equation \eqref{W5}, viewed in $M$, says precisely that the generators of $T$ annihilate each triple value. Conjugation and addition extend this to all of $M$.
\end{proof}

The alternating pairing
\[
\beta:\Gamma\times\Gamma\longrightarrow\Gamma',\qquad \beta(u,v)=[u,v],
\]
is $\F$-bilinear modulo central elements. Centrality of its values gives the balancing rule
\begin{equation}\label{balance}
\beta(u^f,v^g)=\beta(u^{f\mu(g)},v),\qquad f,g\in R.
\end{equation}
Group-ring exponentiation in a pairing is well-defined even when a chosen product representing $u^f$ is only determined modulo the center.

\subsection{A coefficient cover and substitution operators}\label{subsec:cover}
Let $F$ be the relatively free group of countable rank in $\W_k$, and $\Gamma=\gamma_3(F)$. Define the $\F$-vector space
\[
D_k=\bigoplus_{\tau\in\mathbb N^6}e_\tau(R/P^k).
\]
The quotient by $\Gamma\cap Z(F)$ used below is compatible with every endomorphism of $F$. Indeed, if a central word $w$ involves only $x_1,\ldots,x_n$, then $[w,x_j]=1$ for a fresh generator $x_j$. Substituting arbitrary images for $x_1,\ldots,x_n$ and an arbitrary element for $x_j$ shows that every endomorphic image of $w$ is central. Thus $Z(F)$, and hence $\Gamma\cap Z(F)$, is fully invariant here.

There is a surjective linear map
\begin{equation}\label{cover}
\pi(e_{(a,b,c,d,e,f)}q)
=\bigl[[x_a,x_b,x_c]^q,[x_d,x_e,x_f]\bigr].
\end{equation}
Indeed, basic triple commutators normally generate $\Gamma$, and its commutator pairing is central and bilinear. Relation \eqref{W6} kills $P^k$ in \eqref{cover}. It also kills every coefficient multiple of $P^k$: first conjugate the independent first triple slot in that identity, and only then identify any repeated variable indices.

\begin{remark}
The inverse image under $\pi$ of a verbal subgroup need not be an $R$-submodule. We therefore work with invariant linear subspaces for lifts of group substitutions, rather than with coefficient submodules.
\end{remark}

Let $\Phi$ be the increasing reindexings $x_i\mapsto x_{\phi(i)}$, with $\phi:\mathbb N\to\mathbb N$ strictly increasing. Let
\[
\psi_{ij}:x_j\mapsto x_i x_j,\quad x_\ell\mapsto x_\ell\ (\ell\ne j),\qquad i<j.
\]
They act on $F$. Increasing reindexings have the evident lifts to $D_k$.

To lift $\psi_{ij}$, write $t(a,b,c)=[a,b,c]$ additively in $\Gamma/(\Gamma\cap Z(F))$. Ordinary commutator collection gives
\begin{align}
t(ab,c,d)&=t(a,c,d)^{[b,c]}+t(a,c,b)^{(d-1)[b,c]}+t(b,c,d),\label{tri1}\\
t(a,bc,d)&=t(a,c,d)^{[a,b]}+t(a,b,d)+t(a,b,c)^{d-1},\label{tri2}\\
t(a,b,cd)&=t(a,b,d)+t(a,b,c)^d.\label{tri3}
\end{align}
For example $[a,c]^b=[a,c][a,c,b]$; commuting the two resulting factors with $d$ proves \eqref{tri1}. The omitted conjugations by members of $\Gamma$ are central and hence zero in this quotient.

Order triples by their ordinary ordered lexicographic order, without sorting their entries. Equations \eqref{tri1}--\eqref{tri3} show
\[
\psi_{ij}(t_\rho)=\sum_{\sigma\le\rho}t_\sigma^{\alpha_\sigma}.
\]
Every non-original triple decreases at the first changed slot. Repeated occurrences of $j$ are handled by expansion slot by slot. If $j$ does not occur in $\rho$, the expression is simply $t_\rho$.
In these expansions the coefficient of the unchanged marker triple is $1$ modulo $P$ (in particular, every group-commutator factor is $1$ modulo $P$). All coefficients have nonnegative $P$-degree.

If the two triple expansions have coefficients $\alpha_\sigma,\beta_\nu$, choose the lift
\begin{equation}\label{lift}
e_{(\rho,\eta)}q\ \longmapsto\
\sum_{\sigma\le\rho,\,\nu\le\eta}
e_{(\sigma,\nu)}\alpha_\sigma\psi_{ij}(q)\mu(\beta_\nu).
\end{equation}
This commutes with \eqref{cover} by \eqref{balance}. These operators preserve the finite $P$-filtration of $D_k$. The argument below requires stability only under the displayed linear operators.

\subsection{The ascending chain condition}\label{subsec:acc}
\begin{lemma}[Finite-alphabet subsequence lemma]\label{higmanfinite}
The finite words over a finite alphabet, ordered by equal-letter subsequence embedding, form a well-quasi-order: every infinite sequence contains two terms in their original order with the first embedding in the second.
\end{lemma}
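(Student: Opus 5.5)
We prove Higman's lemma for a finite alphabet $\Sigma$ by the minimal bad sequence argument of Nash-Williams. Call an infinite sequence $w_1,w_2,\ldots$ of words \emph{bad} if there are no indices $i<j$ with $w_i$ embedding in $w_j$ as an equal-letter subsequence. The statement says no bad sequence exists, so suppose one does and derive a contradiction.

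First we build a minimal bad sequence. Choose $w_1$ of least length among first terms of bad sequences. Having chosen $w_1,\ldots,w_n$ so that some bad sequence begins with them, choose $w_{n+1}$ of least length among the next terms of such sequences. The resulting sequence is bad, since any violating pair already appears in a finite initial segment, and that segment extends to a bad sequence. No $w_n$ is empty, because the empty word embeds in every word.

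Next we write each $w_n=c_nv_n$ with $c_n\in\Sigma$ its first letter. Since $\Sigma$ is finite, the pigeonhole principle gives an infinite increasing index set $n_1<n_2<\cdots$ on which $c_{n_i}=c$ is constant. Now consider the sequence
\[
w_1,\ldots,w_{n_1-1},v_{n_1},v_{n_2},\ldots .
\]
At position $n_1$ it has a shorter term than $w_{n_1}$, so by the minimality of the construction it cannot be bad. We rule out each possible good pair:
\begin{itemize}
\item A pair $w_i\le w_j$ with $i<j<n_1$ is impossible, since the original sequence is bad.
\item A pair $w_i\le v_{n_\ell}$ with $i<n_1\le n_\ell$ is impossible, since $v_{n_\ell}$ embeds in $w_{n_\ell}$, which would give $w_i\le w_{n_\ell}$ with $i<n_\ell$.
\item A pair $v_{n_\ell}\le v_{n_m}$ with $\ell<m$ would give $cv_{n_\ell}\le cv_{n_m}$ by matching the common first letter $c$, that is, $w_{n_\ell}\le w_{n_m}$, again impossible.
\end{itemize}
Hence the new sequence is bad, which contradicts minimality. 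So no bad sequence exists, which is exactly the well-quasi-order property in the form stated.

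The main point requiring care is the construction of the minimal bad sequence, which uses dependent choice, and the check that it is genuinely bad. The letter-matching step is where the \emph{equal-letter} embedding is used, and the finiteness of $\Sigma$ is what makes the pigeonhole step possible. Later the paper will apply this to index tuples, presumably by encoding $\tau\in\mathbb N^6$ together with the $P$-degree as words over a finite alphabet; none of that is needed for the lemma itself, which is simply \cite{Higman}.
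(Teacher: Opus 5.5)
Your proposal is correct and is essentially the paper's own argument. Both build a minimal bad sequence by least-length choices, apply pigeonhole to the first letter, and use the same three-case check that $w_1,\ldots,w_{n_1-1},v_{n_1},v_{n_2},\ldots$ is still bad, which contradicts minimality at position $n_1$.
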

\begin{proof}
This is the finite-alphabet case of Higman's lemma \cite{Higman}; the following proof makes the needed case independent of a further citation. If a bad infinite sequence exists, choose its words successively so that at each position the word has least possible length among extensions of the chosen prefix to a bad sequence. No chosen word is empty. Infinitely many chosen words $w_{i_0},w_{i_1},\ldots$ start with the same letter $a$, say $w_{i_j}=a v_j$. The sequence
\[
w_0,\ldots,w_{i_0-1},v_0,v_1,\ldots
\]
is also bad. Indeed a comparison within its first block contradicts the original badness; a comparison from that block to some $v_j$ gives the same comparison to $w_{i_j}$; and $v_j\le v_\ell$ gives $a v_j\le a v_\ell$. This contradicts the minimal length chosen for $w_{i_0}$, since $v_0$ is shorter. Thus no bad sequence exists.
\end{proof}

\begin{theorem}\label{marked}
The vector space $D_k$ satisfies the ascending chain condition on linear subspaces stable under the above lifts of $\Phi$ and all $\psi_{ij}$ with $i<j$.
\end{theorem}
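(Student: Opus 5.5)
The plan is the standard Higman–Cohen route. Fix a total ordering of a spanning set of $D_k$, define leading terms, and show that the leading terms of a stable subspace form a set closed under a well-quasi-order. Two subspaces $U\subseteq U'$ with the same set of leading monomials must then coincide.

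First I fix an $\F$-basis of $D_k$. By \eqref{pbw}, $R/P^k$ has basis $a^S s^{J}$ with $S,J$ finite and $|J|<k$. So $D_k$ has basis $e_\tau a^S s^J$ with $\tau\in\mathbb N^6$. I encode such a basis vector as a finite word over a finite alphabet. Read the variable indices $1,2,\ldots,m$ (with $m$ the largest index occurring) in increasing order, and record at each index $i$ a letter. The letter says which of the six marked slots of $\tau$ equal $i$, and whether $i\in S$ and $i\in J$. This gives $2^6\cdot2\cdot2$ letters. I pad the word by an end marker and fix the bounded data in the letters: each of the six slots is used exactly once, and $|J|<k$.

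The increasing reindexings $\Phi$ realise exactly the operation of inserting blank letters. Hence a subsequence embedding of words, restricted to embeddings that send the nonblank letters in order, is implemented by some $\phi\in\Phi$ applied to the basis vector. Lemma~\ref{higmanfinite}, applied to the nonblank letters together with the gap structure, shows this order is a well-quasi-order. Only the relative order of the occupied indices matters, so blanks can be discarded, and the alphabet stays finite.

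Next I need a monomial order compatible with the lifts. I order basis vectors first by $P$-degree $|J|$ (smaller degree higher), then by the marked sextuple $(\rho,\eta)$ in the lexicographic order used for \eqref{lift}, then by $S$ deglex. The key claim is this: for $v\in D_k$ with leading basis vector $m$, and for any embedding $m\le m'$, some composite of operators from $\Phi$ and the $\psi_{ij}$ (plus linear combinations) applied to $v$ has leading term $m'$. Embeddings that only add blanks come from $\Phi$, whose lifts are monomial and order-preserving. The work is in changing the letters occupied by $S$, that is, inserting a generator $a_i$ into $a^S$ at a new index. For this I would use $\psi_{ij}$ with $i$ a fresh index inserted via $\Phi$. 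In \eqref{tri1}--\eqref{tri3} and \eqref{lift}, the unchanged marker triple has coefficient $1$ modulo $P$. The other triples are strictly lexicographically smaller. Hence $\psi_{ij}(v)-v$ (or a suitable difference of $\Phi$-images) has, as its leading part, the marker term multiplied by $a_i$ or by $s_i$, plus lower terms. Differences of this kind let me adjoin new letters, either to $S$ or to $J$, while keeping leading terms controlled. The grading by $P$-degree keeps the $P$-corrections strictly lower because of the finite $P$-filtration.

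With the claim in hand the conclusion is routine. Given an ascending chain $U_1\subseteq U_2\subseteq\cdots$ of stable subspaces, let $L(U)$ be the set of leading monomials. Each $L(U)$ is an upward-closed set in the well-quasi-order. An ascending chain of upward-closed sets in a wqo stabilises, and equal leading-term sets plus inclusion force equality by the usual reduction. Reduction terminates because of the filtration by $P$-degree, and within each degree we only ever reduce finitely many terms.

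The main obstacle is the compatibility claim. It must be shown that the lifts \eqref{lift} of $\psi_{ij}$ genuinely realise each one-letter insertion with a controlled leading term. The difficulty is that the coefficients $\alpha_\sigma,\mu(\beta_\nu)$ may act on both sides, and right multiplication by $a_i$ reorders $a^S$ up to central factors $1+s s'$. I would first try to handle this by working modulo the next $P$-filtration step, where $R$ becomes commutative-like by \eqref{ring-laws}, and then induct downward on $P$-degree from $k-1$.
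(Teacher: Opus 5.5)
Your proposal has a genuine gap, and it is exactly the step you flag as the main obstacle: the claim that the leading terms of a stable subspace form an upward-closed set for the well-quasi-order you feed into Lemma~\ref{higmanfinite}.

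First, the order realized by $\Phi$ alone (inserting blank letters) is not a well-quasi-order. For example, $e_\tau a^S$ with fixed $\tau$ and $S=\{7,\ldots,6+n\}$, $n=1,2,\ldots$, is an infinite antichain. So you really need the full equal-letter subsequence order, in which the target may carry extra $S$-letters and, in your set-up, extra $J$-letters.

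Your only mechanism for inserting letters is $\psi_{ij}$ with fresh $i<j$. It replaces $a_j$ by $a_ia_j$, so it adds an $S$-letter only to the \emph{left} of an existing position $j\in S$ of the head that is neither a marker nor a $J$-position; otherwise \eqref{tri1}--\eqref{tri3} or $s_j\mapsto s_i+s_j$ interfere. Embeddings that add an $S$-letter to the right of every such position are therefore not covered. Neither is a source whose $S$ has no non-special element. Nor does anything adjoin a single $s_i$ to $J$: one application of $\psi_{ij}$ does not multiply the head by $s_i$. Moreover, in your order lower $P$-degree is higher, so realizing an extra $J$-letter would require killing the entire lower-degree part of $v$.

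Two smaller defects. Your order has no tie-break between different $J$ of equal size, so leading terms are not well defined. Also, $i$ must be absent from the \emph{whole} vector, not just the head, since lower terms may carry larger indices.

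The paper closes the gap by shrinking the quasi-order rather than proving your stronger claim.
\begin{enumerate}[label=(\roman*)]
\item It works in one layer $\gr_P^d D_k$. There $I$ has fixed size and is treated, together with the entries of $\tau$, as special data, so no $J$-insertion is ever needed.
\item It orders basis vectors by $\tau$, then $I$ lexicographically, then $(|S|,\operatorname{colex}S)$. It checks that for $j\in S\setminus(\tau\cup I)$ and $i<j$ fresh for the entire vector, $\psi_{ij}$ sends the head exactly to $e_\tau a^{S\cup\{i\}}s_I$, while every lower term stays lower.
\item In each gap between special positions it colours the last non-special $S$-position as a pivot. It then splits words into finitely many classes by special pattern and pivot presence, and applies Higman's lemma within each class.
\end{enumerate}
An embedding in this order matches specials to specials and pivots to pivots. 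Hence every extra target $S$-position lies before a matched pivot and is produced by \eqref{inflate}. Extending $\phi$ with its tail beyond the target's largest index guarantees freshness. The $k$ layers are then reassembled by the intersection-and-image argument. With these changes, your cancellation argument goes through.
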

\begin{proof}
Fix a filtration degree $d<k$. By \eqref{pbw}, a basis of $\gr_P^dD_k$ is
\begin{equation}\label{graded-basis}
e_\tau a^S s_I,\qquad \tau\in\mathbb N^6,\quad S,I\subset\mathbb N\text{ finite},\quad |I|=d.
\end{equation}
Here $s_I=\prod_{i\in I}s_i$, and $S$ need not be disjoint from $I$ or the entries of $\tau$.

Order these basis elements first by ordinary lexicographic order of $\tau$, then by lexicographic order of the increasing $d$-tuple $I$, and then by $(|S|,\operatorname{colex}S)$. For equal cardinalities, colex compares at the largest element of the symmetric difference. This is a well-order, preserved by increasing reindexings. We call the largest basis term of a nonzero finite vector its head.

Suppose a head contains a position
\[
j\in S\setminus\bigl(\{\text{entries of }\tau\}\cup I\bigr),
\]
and choose $i<j$ absent from the support of the entire vector. Applying $\psi_{ij}$ changes this head exactly to
\begin{equation}\label{inflate}
e_\tau a^{S\cup\{i\}}s_I.
\end{equation}
To check this, the marker tuple is fixed because it does not contain $j$. Also $I$ is fixed. On the associated graded coefficient ring, $a_j$ changes to $a_i a_j$. Commuting ordered $a$-factors produces only higher $P$-degree corrections. The possible factors from $\mu$ likewise do not lower $P$-degree.

All lower terms remain lower. A smaller marker tuple cannot increase, by \eqref{tri1}--\eqref{tri3}. For the same tuple, a smaller $I$ cannot increase: $s_j\mapsto s_i+s_j$, and replacing $j$ by $i<j$ decreases the sorted tuple. For the same $\tau,I$, terms not containing $j$ do not gain cardinality, while those containing $j$ gain the same fresh $i$. Degree followed by colex therefore preserves the strict head comparison.

It remains to prove a well-quasi-order for this head expansion. Encode every position from $1$ through the largest index occurring in $\tau,I,S$, including positions carrying no variable. Positions in $\tau\cup I$ are special; record their marker-slot subset, their $I$-flag, and their parity bit (membership in $S$). Ordinary positions are colored $0$ or $1$ according to their membership in $S$. There are at most $6+d$ special positions. In every gap before, between, or after the special positions that contains a non-special parity-$1$ position, mark the last such position as its pivot. Give pivots the color of their gap, distinct from ordinary-position colors.

Partition these words by the ordered special-position pattern, its parity bits, and the presence or absence of a pivot in each gap. There are only finitely many patterns, and each uses a finite alphabet. Lemma~\ref{higmanfinite} applies to the equal-color subsequence order in each class. Because the numbers and ordered colors of all special positions and pivots are fixed, an embedding matches every target special position and every target pivot.

Extend such a subsequence embedding to an increasing $\phi:\mathbb N\to\mathbb N$; place the tail beyond the largest target position. Every additional target parity-$1$ position lies before its matched gap pivot and outside $\phi(\mathbb N)$. It is consequently fresh for \emph{the entire source vector}, even if a lower source term has an index larger than the head's largest index. Reindex by $\phi$, then insert each missing parity-$1$ position using \eqref{inflate}, with the matched pivot as target. These are actual allowed operators, and they preserve the head at every step.

Thus the heads of an invariant subspace form an upward-closed subset of a well-quasi-order. Choose finitely many vectors whose heads generate this upward-closed set. Any vector in the subspace has its head cancelled by a transformed chosen vector. The well-order of heads makes repeated cancellation terminate. This proves finite generation of each invariant subspace in the fixed graded layer, and hence its ACC.

There are only $k$ filtration layers. The usual intersection-and-image argument for a finite invariant filtration transfers ACC from those layers to $D_k$.
\end{proof}

\begin{corollary}\label{centralacc}
The central subgroup $\Gamma'$ satisfies ACC on subgroups that are fully invariant in $F$.
\end{corollary}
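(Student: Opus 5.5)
The plan is to pull the chain condition back along the surjection $\pi\colon D_k\to\Gamma'$ of \eqref{cover} and apply Theorem~\ref{marked}. Let $V_1\le V_2\le\cdots$ be an ascending chain of subgroups of $\Gamma'$, each fully invariant in $F$. Lemma~\ref{structure} gives $\exp\Gamma'\le 2$, so each $V_n$ is an $\F$-subspace of $\Gamma'$. Put $W_n=\pi^{-1}(V_n)\le D_k$. These are linear subspaces, they form an ascending chain, and surjectivity of $\pi$ gives $\pi(W_n)=V_n$. So if the $W_n$ stabilize, the $V_n$ stabilize too.

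The main step is to check that each $W_n$ is stable under the lifts of $\Phi$ and of the $\psi_{ij}$, $i<j$, used in Theorem~\ref{marked}. Every increasing reindexing and every $\psi_{ij}$ is an endomorphism $\theta$ of $F$. Let $\tilde\theta$ be its chosen lift on $D_k$; for $\psi_{ij}$ this is \eqref{lift}, and for $\phi\in\Phi$ it is the evident reindexing of markers and coefficients. The required compatibility is
\[
\pi\circ\tilde\theta=\theta\circ\pi .
\]
For $\psi_{ij}$ this was stated after \eqref{lift}, using \eqref{balance} and \eqref{tri1}--\eqref{tri3}. For $\Phi$ it is immediate from the definition of $\pi$, because a reindexing sends $x_a$ to $x_{\phi(a)}$ and acts on $R/P^k$ through the induced endomorphism of $A$ (Lemma~\ref{ringbasis}).

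One point needs care. The formulas \eqref{tri1}--\eqref{tri3} hold only modulo $\Gamma\cap Z(F)$. Changing either triple-commutator slot by a central element does not change the commutator in \eqref{cover}, so the identity holds exactly in $\Gamma'$. The discussion before \eqref{cover} shows that $\Gamma\cap Z(F)$ is fully invariant, so $\theta$ respects that quotient. Given $w\in W_n$, we get $\pi(\tilde\theta w)=\theta(\pi w)\in\theta(V_n)\subseteq V_n$, using full invariance of $V_n$. Hence $\tilde\theta w\in W_n$.

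Theorem~\ref{marked} now applies: the chain $W_n$ stabilizes, and therefore so does $V_n$, which proves the corollary. The only real obstacle is the compatibility check above. It relies on \eqref{lift} being a genuine lift of $\psi_{ij}$, including when $j$ occurs repeatedly in a marker tuple or when the coefficient $q$ itself involves $a_j$; these are the cases I would verify most carefully. Once that is settled, the rest is formal.
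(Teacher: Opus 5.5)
Your proof is correct and follows the paper's own route: the paper observes that the inverse images under $\pi$ of fully invariant subgroups of $\Gamma'$ are linear subspaces stable under the lifts of $\Phi$ and the $\psi_{ij}$, and then applies Theorem~\ref{marked}. Your compatibility check $\pi\circ\tilde\theta=\theta\circ\pi$ and the final push-forward through the surjectivity of $\pi$ simply make those two steps explicit.
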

\begin{proof}
Their inverse images under \eqref{cover} are invariant linear subspaces for the actual lifts just considered. Apply Theorem \ref{marked}.
\end{proof}

\subsection{The Specht property}\label{subsec:specht}
We spell out the external result and its parameter. For elements of a group ring put
\[
(a,b)_{\rm ass}=ab-ba,\qquad
(a,b,c)_{\rm ass}=abc-bac-cab+cba.
\]
When $V$ is an abelian normal subgroup, $v^{\sum n_g g}$ means $\prod(v^g)^{n_g}$; a product of ring operators acts successively. This is the convention needed below, rather than the group commutator convention for square brackets.

\begin{theorem}[Gupta--Krasilnikov]\label{GK99exact}
Let $E$ be the free group of countably infinite rank. For an integer $r\ge2$, let $V_r$ be its verbal subgroup determined by
\begin{equation}\label{GK99laws}
[[x_1,x_2,x_3],[x_4,x_5,x_6]]=1,\qquad
[x_1,x_2,x_3]^{(x_4,x_5,x_6)_{\rm ass}\cdots
(x_{3r-2},x_{3r-1},x_{3r})_{\rm ass}}=1.
\end{equation}
Then $E/V_r$ satisfies ACC on normal subgroups invariant under increasing reindexings of its free generators. In particular it satisfies ACC on fully invariant subgroups, and the variety defined by \eqref{GK99laws} is Specht.
\end{theorem}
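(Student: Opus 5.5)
The plan is to follow the Gupta--Krasilnikov strategy from \cite{GK99}: prove that the relatively free group $E/V_r$ is Noetherian for normal subgroups invariant under increasing reindexings. The Specht property then follows formally. Put $H=E/V_r$ and $\Gamma=\gamma_3(H)$. By the first law in \eqref{GK99laws}, $\Gamma$ is abelian, so it is a right module over $\mathbb Z[H/\Gamma]$. Any invariant normal subgroup $K$ fits into
\[
1\to K\cap\Gamma\to K\to K\Gamma/\Gamma\to1 .
\]
ACC for such $K$ therefore follows from ACC in two places: among invariant normal subgroups of $H/\Gamma$, and among invariant submodules of $\Gamma$.

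The first place is handled by the known Specht property of $\mathfrak N_2$. I would need the slightly stronger fact that the relatively free class-two group has ACC on normal subgroups stable under increasing reindexings. This is a finite-dimensional-type Higman argument on collected forms $\prod x_i^{\epsilon_i}\prod[x_i,x_j]^{\eta_{ij}}$, using pattern words over a finite alphabet and Lemma~\ref{higmanfinite} on their supports.

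For the second place, I would present $\Gamma$ as a quotient of the free module $\bigoplus_{\rho}e_\rho\,\mathbb Z[H/\Gamma]$ on basic triple commutators, in the same way as \eqref{cover}. The second law says every triple value is killed by any product of $r-1$ associators $(a,b,c)_{\rm ass}$ of group elements. Let $J$ be the two-sided ideal generated by these associators. Then $J^{r-1}$ acts trivially, and $\Gamma$ acquires a finite filtration $\Gamma\supseteq\Gamma J\supseteq\cdots\supseteq\Gamma J^{r-1}=0$. It then suffices to prove ACC on each layer $\Gamma J^m/\Gamma J^{m+1}$. On such a layer the acting ring is $\mathbb Z[H/\Gamma]/J$, and this ring is ``almost commutative'': commutators $(a,b)_{\rm ass}$ are central modulo $J$.

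The key step, mirroring Theorem~\ref{marked}, is to choose a PBW-type basis of each graded layer, consisting of a marker triple, an increasing product of generators, and a product of commutator-type factors, with the ring laws modulo $J$ playing the role of \eqref{ring-laws}. I would define heads by a well-order compatible with increasing reindexings. I would then show that substitutions $x_j\mapsto x_ix_j$ with fresh $i$ inflate a head by a single position while keeping lower terms lower. The heads are encoded as words over a finite alphabet with special positions and pivots, and Lemma~\ref{higmanfinite} makes the set of heads of an invariant submodule finitely generated. Cancellation along the well-order then gives finite generation of the submodule, which is ACC. The fully invariant statement is immediate, since fully invariant subgroups are invariant normal subgroups. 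For Specht, note that any subvariety corresponds to a fully invariant $K\ge V_r$. ACC makes $K$ the fully invariant closure of finitely many words, so finitely many identities added to \eqref{GK99laws} define it.

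The main obstacle is the layer argument over $\mathbb Z$ rather than $\F$. Torsion-free coefficients and the absence of exponent laws mean the ring modulo $J$ is not square-zero in the $s_i$. Head inflation must therefore control integer multiples, which needs a Hilbert-basis-style refinement: heads carry integer leading coefficients, and one takes the gcd along upward-closed sets. I would first try to reduce to the paper's setting by treating $\mathbb Z$-coefficients with leading-coefficient ideals in $\mathbb Z$. Since $\mathbb Z$ is Noetherian, ACC on ideals combined with the well-quasi-order on heads still gives a well-quasi-order. Failing that, I would simply cite \cite{GK99} for this step, since the statement is quoted from there.
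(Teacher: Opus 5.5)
You follow the route the paper attributes to \cite{GK99}. You split an invariant normal subgroup of $H=E/V_r$ along the abelian subgroup $\Gamma=\gamma_3(H)$, use a chain condition on the class-two quotient, and prove one for $\Gamma$ as a $\mathbb Z[H/\Gamma]$-module. The filtration by powers of the associator ideal $J$ is legitimate: conjugation moves group elements past associators, so $J^{r-1}$ does annihilate $\Gamma$. The final passage to an absolute finite basis is also correct.

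The gap is that the module chain condition, which is the real content of the theorem, is not proved by your sketch. It is carried entirely by the citation of \cite[Propositions 1--2]{GK99}. That is also all the paper's proof does, so as reduction plus citation your proposal agrees with the paper. The self-contained argument you outline for the module step fails in two concrete places.

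First, you inflate heads with the substitutions $\psi_{ij}\colon x_j\mapsto x_ix_j$, as in Theorem~\ref{marked}. The subgroups in the statement are only normal and stable under increasing reindexings, and such subgroups need not be $\psi_{ij}$-stable. For example, let $N$ be the normal closure of all $[x_a,x_b,x_c]$ with $a<b<c$; it is normal, lies in $\Gamma$, and is stable under increasing reindexings. Modulo $\gamma_4(H)$ one has $\psi_{13}([x_1,x_2,x_3])\equiv[x_1,x_2,x_3][x_1,x_2,x_1]$. Since $V_r\le\gamma_4(E)$, the quotient $H/\gamma_4(H)$ is free of class three. The image of $N$ in $\gamma_3/\gamma_4$ lies in the multilinear components, whereas $[x_1,x_2,x_1]$ has multidegree $(2,1)$, so it is not in $N\gamma_4(H)$. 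Hence this route gives at most the fully invariant version. In Theorem~\ref{marked} the substitutions are needed only because $D_k$ is not a module. Here normality supplies the module action, and inflation has to come from right multiplication.

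Second, the head calculus of Theorem~\ref{marked} depends on \eqref{ring-laws}: coefficients in $\F$, square-zero $s_i$, and one parity bit per position, hence a finite alphabet for Lemma~\ref{higmanfinite}. None of this survives in $S=\mathbb Z[H/\Gamma]/J$. The retraction of $H/\Gamma$ onto $\langle x_1\rangle$ kills $J$, so $\mathbb Z[x_1^{\pm1}]$ embeds in $S$. Exponents are therefore unbounded integers and there is no finite alphabet. Moreover, no well-order on the monomials $x_1^n$ is compatible with multiplication by $x_1$, since it would be a translation-invariant order on $\mathbb Z$. What is needed is a new normal form for $S$ and for the layers $J^m/J^{m+1}$, whose relations come from Lie nilpotency rather than from \eqref{ring-laws}. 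You would then need Higman's lemma over an infinite well-quasi-ordered alphabet.

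Your leading-coefficient refinement is sound as far as it goes. An ascending chain of monotone maps from a well-quasi-order into the ideals of $\mathbb Z$ stabilizes, which gives ACC (not a well-quasi-order). But it only handles the coefficient ring, not the exponent problem above.
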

\begin{proof}[Source and specialization]
This is \cite[Propositions 1--2, p.~332]{GK99}, using its parameter $k=r$ and renaming its verbal subgroup $U_k$ to $V_r$ to avoid confusion with our $\U_k$. The paper states the result for all positive parameters; only $r\ge2$ is needed here. Proposition 2 is the stronger assertion about increasing reindexings, and Proposition 1 is its fully invariant consequence. The proof is on pp.~353--355: the third lower central subgroup is treated as a module over the free class-two quotient; the module chain condition is then combined with the chain condition on the quotient. The two displayed identities form a finite defining system. ACC gives a finite relative basis for each subvariety, and adjoining this defining system gives an absolute finite basis. Thus the conclusion concerns ordinary group identities, with no exponent-four hypothesis or distinguished projection added. There are $r-1$ triple factors. In particular, $r=3$ gives exactly
\[
[x_1,x_2,x_3]^{(x_4,x_5,x_6)_{\rm ass}(x_7,x_8,x_9)_{\rm ass}}=1.
\]
Here the source identifier is GK99, DOI 10.1017/S1446788700002056, theorem identifiers Propositions 1 and 2; no arXiv identifier has been established.
\end{proof}

\begin{lemma}\label{quotient99}
For $F$ as above, $F/\Gamma'$ belongs to the Specht variety in Theorem~\ref{GK99exact} with $r=3$.
\end{lemma}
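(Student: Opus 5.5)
We must check that $F/\Gamma'$ satisfies the two identities of \eqref{GK99laws} with $r=3$. Here $F$ is relatively free in $\W_k$, $\Gamma=\gamma_3(F)$, and $\Gamma'$ is the derived subgroup of $\Gamma$. The first identity $[[x_1,x_2,x_3],[x_4,x_5,x_6]]=1$ holds in $F/\Gamma'$ at once, since both triple commutators lie in $\Gamma$ and their commutator lies in $\Gamma'$. So the whole task is the second identity,
\[
[x_1,x_2,x_3]^{(x_4,x_5,x_6)_{\rm ass}(x_7,x_8,x_9)_{\rm ass}}=1,
\]
where $\Gamma/\Gamma'$ is written as a right module over $\mathbb Z[F]$ under conjugation.

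I plan to pass through the module $M=\Gamma/(\Gamma\cap Z(F))$ of Lemma~\ref{structure}. The first step handles the quotient by $\Gamma\cap Z(F)$. Conjugation on $M$ factors through a $\D$-quotient $A\to G/(G^4\Gamma)$, and $M$ is an $R$-module. By Lemma~\ref{ringbasis}, every associator $(a,b,c)_{\rm ass}$ of group elements vanishes in $R$. So for a triple value $u$, the element $u^{(x_4,x_5,x_6)_{\rm ass}}$ already lies in $\Gamma\cap Z(F)$ modulo $\Gamma'$, and a single associator suffices at this stage.

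The second step uses the second associator to kill the central part. Let $w=u^{(x_4,x_5,x_6)_{\rm ass}}$ be formed in $\Gamma/\Gamma'$, and choose a representative $\tilde w\in\Gamma$. From the first step, $\tilde w\in (\Gamma\cap Z(F))\Gamma'$. Since $\Gamma'\le Z(F)$ by Lemma~\ref{structure}, in fact $\tilde w\in Z(F)$, so conjugation by any element of $F$ fixes $\tilde w$ exactly. The expression $(x_7,x_8,x_9)_{\rm ass}=x_7x_8x_9-x_8x_7x_9-x_9x_7x_8+x_9x_8x_7$ then acts on $\tilde w$ as $\tilde w^{1-1-1+1}=1$, because its augmentation is $0$. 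Hence the full identity holds even in $\Gamma$, and therefore in $\Gamma/\Gamma'$.

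The main obstacle is the bookkeeping in the first step. The exponent $(x_4,x_5,x_6)_{\rm ass}$ lies in $\mathbb Z[F]$, while Lemma~\ref{ringbasis} works in $R$, a quotient of $\F[A]$. Two points need checking. First, integer coefficients reduce mod 2 on $M$; this holds because $M$ has exponent $2$, since $\Gamma^2\le Z(F)$. Second, the map $\mathbb Z[F]\to R$ induced by $F\to F/F^4\Gamma\twoheadleftarrow A$ sends each $x_i$ to a group element of $A$. The vanishing $(a,b,c)_{\rm ass}=0$ in $R$ for all $a,b,c\in A$ then gives the claim, so the choice of lift of $x_4,x_5,x_6$ to $A$ is harmless.
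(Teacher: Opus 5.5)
Your proof is correct and follows the paper's argument. The first associator vanishes in $R$, so it sends triple values into $(\Gamma\cap Z(F))/\Gamma'$, and the second associator, having augmentation $0$, then kills these central values; your extra bookkeeping (reducing integer coefficients mod $2$ via $\Gamma^2\le Z(F)$, and lifting the $x_i$ through the epimorphism from $A$) only makes explicit what the paper leaves implicit.
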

\begin{proof}
Let $Q=F/\Gamma'$ and $V=\gamma_3(Q)=\Gamma/\Gamma'$. Then $V$ is abelian, so ring-operator actions on it are well-defined. Lemma~\ref{structure} gives the further quotient
$M=\Gamma/(\Gamma\cap Z(F))$, annihilated by $T$. By Lemma~\ref{ringbasis},
\[
(a,b,c)_{\rm ass}=cba\,s(a)s(b)(s(a)+s(b))s(c)=0
\quad\hbox{in }R.
\]
For arbitrary $h\in\Gamma$ and $a,b,c\in F$, it follows that the first Lie-triple action on $h\Gamma'$ lies in $(\Gamma\cap Z(F))/\Gamma'$, which is central in $Q$. It is not necessary, and has not been asserted, that this first action is zero in $V$. Every central element $z$ satisfies $z^\alpha=z^{\aug(\alpha)}$ for a group-ring operator $\alpha$. Since an associative Lie triple has augmentation $1-1-1+1=0$, the second triple action kills the first value. This proves the second identity of \eqref{GK99laws} for $r=3$; the first holds because $V$ is abelian. The source theorem applies to these ordinary group identities exactly.
\end{proof}

\begin{theorem}\label{Wspecht}
For every $k\ge1$, $\W_k$ is Specht.
\end{theorem}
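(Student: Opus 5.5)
The plan is to show that the relatively free group $F$ of countable rank in $\W_k$ has the ascending chain condition on fully invariant subgroups, and then convert this into the Specht property. For the conversion: a subvariety $\mathcal V\subseteq\W_k$ corresponds to the fully invariant subgroup $V=\Id(\mathcal V)\cap F$. ACC gives $V$ as the fully invariant closure of finitely many words. Those words, together with the six defining identities \eqref{W1}--\eqref{W6}, form a finite basis of $\mathcal V$. The same applies to $\W_k$ itself, which is finitely based by definition.

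To get ACC in $F$, I will use the central fully invariant subgroup $\Gamma'$ and split any ascending chain $V_1\le V_2\le\cdots$ of fully invariant subgroups of $F$ into two chains:
\[
V_n\Gamma'/\Gamma' \quad\text{in } F/\Gamma', \qquad V_n\cap\Gamma' \quad\text{in } \Gamma'.
\]
\begin{itemize}
\item The first chain consists of fully invariant subgroups of $F/\Gamma'$. By Lemma~\ref{quotient99}, $F/\Gamma'$ lies in the Specht variety of Theorem~\ref{GK99exact} with $r=3$. I have to be careful here, because $F/\Gamma'$ need not be relatively free in that variety. However, it is the relatively free group of countable rank of the subvariety $\W_k\cap\Mod([t(\mathbf x),t(\mathbf y)]=1)$. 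So its fully invariant subgroups correspond to the subvarieties of this subvariety, which is contained in the Specht GK99 variety. Each of these subvarieties is finitely based, and the lattice of subvarieties of a Specht variety satisfies DCC. This is the standard equivalence: a strictly descending chain of varieties would have a non-finitely based intersection. Hence the first chain stabilizes.
\item The second chain consists of subgroups of $\Gamma'$ that are fully invariant in $F$, because both $V_n$ and $\Gamma'$ are fully invariant. Corollary~\ref{centralacc} therefore makes it stabilize.
\end{itemize}
Once both chains are constant from some index on, the modular law gives $V_n=V_m$. Indeed, for $V_n\le V_m$ with $V_n\Gamma'=V_m\Gamma'$ and $V_n\cap\Gamma'=V_m\cap\Gamma'$, we have $V_m=V_m\cap V_n\Gamma'=V_n(V_m\cap\Gamma')=V_n$.

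The main obstacle is the first step, the passage through the GK99 result. Theorem~\ref{GK99exact} is phrased for the free group quotient $E/V_3$, while we need ACC in a proper quotient of it, namely $F/\Gamma'$. I would handle this in one of two ways.
\begin{itemize}
\item Argue via varieties as above, using the fact that the quotient of $E/V_3$ by a fully invariant subgroup inherits ACC on fully invariant subgroups. This holds because the fully invariant subgroups of $F/\Gamma'$ pull back to fully invariant subgroups of $E/V_3$ under the natural surjection $E/V_3\to F/\Gamma'$ on matching free generators.
\item Check directly that $E\to F/\Gamma'$ sends free generators to free generators. Then every endomorphism of $F/\Gamma'$ lifts to $E$, and full invariance pulls back.
\end{itemize}
The preimage route is cleanest, so that is the one I will write. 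The remaining bookkeeping is to confirm that the laws \eqref{W1}--\eqref{W6} together with $[t(\mathbf x),t(\mathbf y)]=1$ hold in $F/\Gamma'$, which Lemma~\ref{quotient99} already supplies.
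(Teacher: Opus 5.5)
Your proposal is correct and follows the paper's proof: you split a chain of fully invariant subgroups of $F$ into its images in $F/\Gamma'$ (controlled by Lemma~\ref{quotient99} together with GK99) and its intersections with $\Gamma'$ (controlled by Corollary~\ref{centralacc}), recombine the two by the modular law, and then add \eqref{W1}--\eqref{W6} to turn finite relative bases into absolute ones. Two small points of logic. First, the images $V_n\Gamma'/\Gamma'$ are fully invariant because every endomorphism of $F/\Gamma'$ lifts to $F$; the paper states this explicitly, and you should too. Second, in your alternative route, full invariance pulls back to $E/V_3$ because endomorphisms of $E/V_3$ descend through the verbal kernel, not because endomorphisms of $F/\Gamma'$ lift.
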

\begin{proof}
Consider an ascending chain of fully invariant normal subgroups of $F$. Its images in $F/\Gamma'$ stabilize by Lemma \ref{quotient99}. This quotient is relatively free in $\W_k\cap\Mod(\gamma_3'=1)$, so all its generator substitutions lift to $F$. Its intersections with $\Gamma'$ stabilize by Corollary \ref{centralacc}. Stabilization of image and intersection forces stabilization of the original chain.

Hence every fully invariant subgroup of $F$ is finitely generated as such. Every subvariety of $\W_k$ has a finite relative identity basis. Adding the six defining identities \eqref{W1}--\eqref{W6} makes that basis absolute. This is precisely the Specht assertion.
\end{proof}

\section{The associated bigroup variety}\label{sec:bigroups}
A bigroup is a split group $N\rtimes A$ with a distinguished projection onto $A$; equivalently, it is a group with a distinguished idempotent endomorphism. Let $\LL$ be the bigroup variety defined in \cite{GK01}. In the present notation its conditions are $N,A\in\D$, $N^2\le Z(N\rtimes A)$, centrality of all $r(n;a,b)$, and
\[
[n^{(1+a_1^2)\cdots(1+a_q^2)},n]=1\quad(q\ge1).
\]
\begin{theorem}[Gupta--Krasilnikov]\label{GK01exact}
The bigroup variety $\LL$ just defined is not finitely based, and every proper bigroup subvariety has a finite identity basis. Moreover, if a bigroup identity $u=1$ fails in $\LL$, then the subvariety obtained by adjoining $u=1$ satisfies either $[a_1,a_2]=1$ on the acting group or
\[
[b^{a_1s(a_2)\cdots s(a_m)},b]=1
\]
for some $m\ge1$, with arbitrary kernel input $b$ and acting inputs $a_i$.
\end{theorem}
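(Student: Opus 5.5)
The statement is imported from \cite{GK01}, so the plan is not to reprove it. Instead I would check that our finite list of conditions defines the same bigroup variety as in \cite{GK01}, and then read the three assertions off their results and their proof.

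\emph{Step 1: the dictionary.} In \cite{GK01} the bigroup variety is given by identities in two sorts: kernel variables $n,b,\dots$ and acting variables $a_i$. The coefficient ring there is the group algebra of the relatively free $\D$-group modulo the ideal generated by the elements $t(a,b)$. I would match each defining law with ours.
\begin{itemize}[leftmargin=*]
\item $N,A\in\D$ and $N^2\le Z(N\rtimes A)$ correspond directly.
\item Centrality of $r(n;a,b)$ says that $N/Z$ is a module annihilated by $T$. This is exactly the reading of \eqref{W5} used in Lemma~\ref{structure}.
\item The laws $[n^{s(a_1)\cdots s(a_q)},n]=1$ correspond to their nilpotency conditions on the alternating pairing into $N^2$.
\end{itemize}
Lemma~\ref{ringbasis}, which was written to re-derive their Lemmas~3--8, shows that the two coefficient rings coincide; in particular \eqref{pbw} and \eqref{ring-laws} give the same normal form. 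Once the two identity systems are shown to be mutually derivable, the first two assertions are their main theorem verbatim.

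\emph{Step 2: the dichotomy for a failing identity $u=1$.} I would split according to where $u$ fails.
\begin{itemize}[leftmargin=*]
\item If $u$ fails on the acting sort, then $u=1$ defines a proper subvariety of $\D=\var(D_8)$ by Lemma~\ref{freeD}. The subvarieties of $\var(D_8)$ are known: each is abelian or all of $\var(D_8)$. Hence $[a_1,a_2]=1$ holds on the acting group.
\item Otherwise, collect $u$ in the relatively free bigroup of $\LL$. Since $u$ holds on the acting sort, we may multiply by an acting word so that $u$ lies in the kernel. Modulo the $\LL$-identities, $u$ then becomes an element of the kernel module together with a central pairing part.
\end{itemize}
In the second case I would follow Gupta and Krasilnikov's proof that proper subvarieties are finitely based.
\begin{itemize}[leftmargin=*]
\item Take the lowest $P$-degree component of the nonzero consequence of $u$.
\item Use substitutions (linearization and specialization of acting variables, as in the head-inflation argument of Theorem~\ref{marked}) to isolate a single monomial $a^S s_I$.
\item Pair that monomial with $b$ to obtain $[b^{a_1s(a_2)\cdots s(a_m)},b]=1$ for some $m$.
\item Squaring kernel elements, via $N^2$ central, handles the case where the failure lies in the module part rather than in the pairing.
\end{itemize}

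\emph{Main obstacle.} I expect the hardest step to be the second case of Step~2. The dichotomy is not stated as a separate theorem in \cite{GK01}; it lives inside their proof, and it must be extracted in a form using ordinary commutators $[\,\cdot,\cdot\,]$ rather than their ring notation. In particular, one has to check that the normalizing factor $a_1$, which is not of the form $s(\cdot)$, is really attainable by specialization. I would first try to obtain it by setting $a_1$ equal to the group element representing $a^S$ in the isolated monomial. The remaining factors $s(a_2)\cdots s(a_m)$ would come from $s_I$, with the factors coming from $I$ made disjoint through fresh variables. I would then verify that the resulting identity implies back the one isolated from $u$, using \eqref{balance}.
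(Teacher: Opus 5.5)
Your Step~1 is what the paper does for the first two assertions: it matches $\LL$ with the variety of \cite{GK01} and cites its Main Theorem. The dichotomy, however, is not extracted from a proof in the paper. It is cited as Proposition~4 of \cite{GK01}, where it is stated, so you could simply cite it too. The replacement argument you sketch would not work as written.

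Your acting-sort case is fine (Lemma~\ref{D8reduction}). In the kernel case, however, squaring, or pairing with the same $b$, can erase exactly what $u$ carries.
\begin{itemize}
\item Take $u=b\,b^{a^2}$, i.e.\ $b^{s(a)}$. It fails in $G_*\in\LL$, since at $b=(x,y,c)$ its rows are $xs(a),ys(a)$. Yet $u^2=[u,b]=[b^{a^2},b]=w_1(b;a)$, a law of $\LL$.
\item Take the central element $u=r(b;a,a')$. Its value in $G_*$ is $(0,0,x\,s(a)s(a')\mu(y))\ne1$. Every commutator with $u$ is trivial, and $u^2=w_2(b;a,a')$ in $\LL$.
\end{itemize}
Your treatment of $a_1$ also runs the wrong way. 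The substitution $a_1\mapsto a^S$ derives the isolated law \emph{from} the target law. Moreover, for $S=\emptyset$ the isolated diagonal law is some $w_j$, which carries no information. The head inflation of Theorem~\ref{marked} does not help here: it is an ACC device and isolates no monomial.

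The missing idea is to linearize in a kernel variable before isolating a monomial. A diagonal law $[b^f,b]=1$ has no ideal structure. The substitution $b\mapsto b^g$ only replaces $f$ by $gf\mu(g)$. In $G_*$ the value of $[b^f,b]$ is $(0,0,x(f+\mu(f))\mu(y))$, so symmetric coefficients such as $s_I$ are invisible; this is why the non-square factor $a_1$ is needed at all.

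Each kind of failure should instead be turned into a bilinear law $[b^c,b']=1$ with $0\ne c\in R$. For a module failure, commute with a fresh $b'$; for a central failure or a diagonal law, substitute $b\mapsto bb'$. The coefficients of such bilinear laws form an $\operatorname{End}(A)$-invariant two-sided ideal, as one sees by substituting $b\mapsto b^g$ and $b'\mapsto (b')^h$ independently. The $\theta_i$-isolation in the proof of Proposition~\ref{cofinal} then gives $s_1\cdots s_N$, hence $a_{N+1}s_1\cdots s_N$. Putting $b'=b$ yields the required law with $m=N+1$.

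Finally, any such reconstruction must be computed inside $\LL$ from its defining laws rather than in $G_*$. The reason is that $\var_{\rm bgp}(G_*)=\LL$ (Proposition~\ref{bigroup-generation}) is itself deduced from this theorem.
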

\begin{proof}[Source and context]
These are the Main Theorem and Proposition 4 of \cite{GK01}, pp.\ 4013--4014. Its variety $V$ is our $\LL$; its relatively free acting group $G$ is our $A$; its $\mathfrak N_{2,4}$ is our $\D$. In its bigroup words, $x$-variables range over the acting group and $y$-variables over the kernel. This sorted notation is equivalent to a group with an idempotent endomorphism $p$: use $p(x)$ for acting inputs and $p(x)^{-1}x$ for kernel inputs. Its identities (1)--(5) are exactly the finite structural conditions displayed above; its family (6) is replaced there by the equivalent square-coefficient family (7), namely our $w_j$, $j\ge1$.

Proposition 4 is proved in Section 3 by reducing a failed identity to an acting word, a square, or one- or two-kernel-variable coefficient expressions, and then extracting a square monomial by independent bigroup substitutions. Section 4 uses this extra diagonal law to deduce all sufficiently long defining laws. Section 5 proves ACC on verbal subbigroups, so that arbitrary proper subvarieties, not merely those defined by one extra law, have finite bases. This explains both the scope and the role of the distinguished projection in the cited theorem. We use it in that signature only. Source identifier: GK01, DOI 10.1081/AGB-100105987; result identifiers: Main Theorem and Proposition 4; no established arXiv identifier.
\end{proof}

\begin{proposition}\label{bigroup-generation}
The bigroup $G_*$ with its natural projection onto $A$ generates $\LL$.
\end{proposition}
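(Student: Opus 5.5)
Two inclusions are needed. First, the bigroup $G_*=N(R)\rtimes A$ with projection onto $A$ lies in $\LL$; second, every bigroup identity of $G_*$ holds in $\LL$.

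\emph{Membership.} Lemma~\ref{freeD} and Lemma~\ref{heisfacts} give $A\in\D$, $N(R)\in\D$ and $N(R)^2\le Z(G_*)$. For centrality of $r(n;a,b)$ we compute in the abelian quotient $N(R)/Z(N(R))\cong R^2$, where $A$ acts by right multiplication. There $r(n;a,b)$ becomes $(u,v)\cdot t(a,b)$, which vanishes because $t(a,b)\in T$. Hence $r(n;a,b)$ lies in $Z(N(R))\le Z(G_*)$.

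For the laws $[n^{s(a_1)\cdots s(a_q)},n]=1$, write $n=(u,v,c)$ and $f=s(a_1)\cdots s(a_q)$. Then $n^f$ has coordinates $(uf,vf,\ast)$, and its commutator with $n$ is
\[
(0,0,uf\mu(v)+u\mu(vf)).
\]
Since each $s_i$ is central and $\mu(s(a))=s(a)$, we have $\mu(vf)=f\mu(v)$. Thus we must show $uf\mu(v)=uf\mu(v)$ up to order, i.e.\ $(uf,\ldots)$ symmetric. Because $f$ is central, $u\mu(vf)=uf\mu(v)$, and the sum vanishes in characteristic two. This establishes $G_*\in\LL$.

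\emph{Reverse inclusion.} We use Theorem~\ref{GK01exact}. If $\var_{\rm big}(G_*)$ were a proper subvariety, it would be defined inside $\LL$ by some identity $u=1$ failing in $\LL$. It would then satisfy either $[a_1,a_2]=1$ on the acting group, or $[b^{a_1s(a_2)\cdots s(a_m)},b]=1$ for some $m\ge1$. The first fails because $A$ is nonabelian. For the second, take $b=(1,1,0)$ and $a_1,\ldots,a_m$ distinct free generators of $A$. The commutator is
\[
(0,0,\ g+\mu(g)),\qquad g=a_1s_2\cdots s_m .
\]
Here $\mu(g)=a_1^{-1}s_2\cdots s_m=a_1(1+s_1)s_2\cdots s_m$, so the sum equals $a_1s_1s_2\cdots s_m$. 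By the basis~\eqref{pbw} of Lemma~\ref{ringbasis} this element is nonzero in $R$. So the second law also fails, and hence $\var_{\rm big}(G_*)=\LL$.

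The main obstacle is this nonvanishing check. It requires verifying that $\mu(a_1)=a_1(1+s_1)$, using $a_1^{-1}=a_1^3=a_1a_1^2$, and that the PBW basis keeps $a_1s_1\cdots s_m$ independent.
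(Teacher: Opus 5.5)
Your proposal is correct and follows the paper's proof step for step. Membership in $\LL$ comes from $T$ killing the linear coordinates of $r(n;a,b)$ and from the Heisenberg commutator formula with the central, $\mu$-fixed coefficient $s(a_1)\cdots s(a_q)$. The reverse inclusion uses the second assertion of Theorem~\ref{GK01exact}: $A$ is nonabelian, and the kernel value $(1,1,0)$ yields $g+\mu(g)=a_1s_1\cdots s_m\ne0$ by \eqref{pbw}. You simply spell out the coordinate computations that the paper leaves terse.
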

\begin{proof}
The group \eqref{groupstar}, with its distinguished projection, belongs to $\LL$: $N^2$ is fixed centrally, the defining relation of $T$ kills the two linear coordinates of $r$, and central involutions $a\in A^2$ are fixed by $\mu$. The Heisenberg commutator formula then verifies all displayed diagonal laws.

The second assertion of Theorem~\ref{GK01exact} forces a proper subvariety of $\LL$ either to have abelian acting group or to satisfy a diagonal law with coefficient $f=a_1s_2\cdots s_m$ (up to renaming). The first possibility fails in $A$. For the second, take the kernel value $(1,1,0)$: the resulting commutator is $f+\mu(f)=a_1s_1\cdots s_m\ne0$ by \eqref{pbw}. Thus $G_*$ generates $\LL$, and therefore
\begin{equation}\label{reductid}
\var_{\rm bgp}(G_*)=\LL,
\qquad \Id(\U)=\Id(\LL)\cap\{\text{pure group words}\}.
\end{equation}
\end{proof}

\begin{proposition}\label{cuts-specht}
The variety $\U$ is contained in $\mathfrak B_4\mathfrak B_2$. For every $k\ge0$, the subvariety $\U_k$ is Specht.
\end{proposition}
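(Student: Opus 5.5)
The first assertion comes directly from Lemma~\ref{heisfacts}. There $G_*$ was shown to lie in $\mathfrak B_4\mathfrak B_2$, and that class is a variety, as recorded at the end of the proof of Lemma~\ref{freeD}. Hence $\U=\var_{\rm grp}(G_*)$ is contained in it.

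For the Specht assertion, the plan is to show $\U_k\subseteq\W_{\max(k,1)}$ and then invoke Theorem~\ref{Wspecht}. A subvariety of a Specht variety is Specht: every subvariety of $\U_k$ is a subvariety of $\W_{\max(k,1)}$ and is therefore finitely based. For $k=0$, the identity $b_1=1$ is a consequence of $b_0=1$. Indeed, $b_0=1$ makes $\gamma_3$ abelian, so $[t(\mathbf x),y^2]\in\gamma_3$ commutes with $t(\mathbf z)$. Thus $\U_0\subseteq\U_1$, and it suffices to treat $k\ge1$. Identity \eqref{W6} holds in $\U_k$ by definition. So the task is to verify \eqref{W1}--\eqref{W5} in $G_*$; they then hold throughout $\U$.

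\eqref{W1} holds because $G_*\in\mathfrak B_4\mathfrak B_2$, so every element has order dividing $8$. For the remaining identities, note first that $\gamma_3(G_*)\le N(R)$, since $A\in\D$ has class two. For $n=(u,v,c)\in N(R)$ and $a,b\in A$ we compute with the action. Here $[n,a]=(u(1+a),v(1+a),\ast)$, and commutators with elements of $N(R)$ land in the centre $\{(0,0,c)\}\le Z(G_*)$. The key observation is that $Z:=\{(0,0,c)\}$ is central in $G_*$ and $N(R)/Z\cong R^2$ is an $R$-module on which $A$ acts by right multiplication. Consequently:
\begin{itemize}
\item \eqref{W2} holds because squares of elements of $N(R)$ lie in $N(R)^2=Z$.
\item \eqref{W3} holds because $N(R)'\le Z$.
\item \eqref{W4} holds because $x^4\in N(R)$ for every $x\in G_*$ (the quotient has exponent dividing $4$... indeed $G_*/N(R)\cong A\in\mathfrak B_4$), so $[x^4,t(\mathbf y)]\in N(R)'\le Z$.
\end{itemize}

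For \eqref{W5}, write $h=t(\mathbf x)\in N(R)$. Modulo $Z$, conjugation by $g=ma$ with $m\in N(R)$, $a\in A$ acts on $N(R)/Z$ as right multiplication by $a$, because inner automorphisms of $N(R)$ are trivial modulo $Z$. Hence $r(h;a,b)$ modulo $Z$ has linear coordinates $(u,v)\cdot t(\bar a,\bar b)$, where $\bar a,\bar b$ are the images in $A$. This vanishes in $R$, so $r(h;a,b)\in Z$ is central. The same argument applies with $a,b$ arbitrary elements of $G_*$. This is essentially the membership check in Proposition~\ref{bigroup-generation}.

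The main obstacle is bookkeeping rather than anything deep. One must keep sign and inversion conventions straight ($h^{-1}$ versus additive negation in characteristic $2$). One must also make sure that conjugation by general elements of $G_*$, not only by elements of $A$, reduces to the $A$-action modulo $Z$. Both points follow from centrality of $Z$ and the fact that $N(R)/Z$ is abelian. With these checks, $\U_k\subseteq\W_k$, and Theorem~\ref{Wspecht} finishes the proof.
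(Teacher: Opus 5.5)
Your proof is correct and matches the paper's argument for $k\ge1$. You verify \eqref{W1}--\eqref{W5} directly in $G_*$ from $N(R)^2=N(R)'=Z(N(R))\le Z(G_*)$, $\gamma_3(G_*)\le N(R)$, $x^4\in N(R)$, and the fact that conjugation on $N(R)/Z(N(R))$ factors through right multiplication in $R$, and then you apply Theorem~\ref{Wspecht}. The only difference is at $k=0$: your observation that $b_0=1$ implies $b_1=1$, so that $\U_0\subseteq\U_1\subseteq\W_1$, neatly replaces the paper's separate appeal to \cite{GK99} through the argument of Lemma~\ref{quotient99}.
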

\begin{proof}
All reducts of $\LL$ satisfy \eqref{W1}--\eqref{W5}. By \eqref{reductid}, it suffices to check these pure identities in $G_*$. The subgroup $N(R)\rtimes A^2$ has exponent at most four: for $q\in A^2$, the square of $(u,v,c)q$ has linear coordinates $u(1+q),v(1+q)$, whose product under the square formula is zero because $\mu(q)=q$ and $(1+q)^2=0$. The quotient by this subgroup is elementary abelian. This proves $G_*\in\mathfrak B_4\mathfrak B_2$ and exponent at most eight. In this concrete group,
\[
N(R)^2=N(R)'=Z(N(R))=\{(0,0,c):c\in R\}\le Z(G_*).
\]
For example $(c,1,0)^2=(0,0,c)$, and commutation with $(1,0,0)$ and $(0,1,0)$ forces both linear coordinates of a central element to vanish. Also $\gamma_3(G_*)\le N(R)$. Thus \eqref{W2}--\eqref{W3} follow. The projection onto the acting group of $x^4$ is trivial, giving \eqref{W4}. For \eqref{W5}, use $N(R)/N(R)^2$, where inner kernel conjugations disappear and the action factors through $R$ by construction. This last assertion is made for $G_*$, not for an arbitrary member of $\LL$: in general central $r$-values need not belong to $N^2$. Consequently
\begin{equation}\label{UkWk}
\U_k\subseteq\W_k\quad(k\ge1).
\end{equation}
For $k=0$, the argument of Lemma~\ref{quotient99} places $\U_0$ directly in the Specht variety of \cite[Propositions 1--2]{GK99}. For $k\ge1$, apply Theorem~\ref{Wspecht} to \eqref{UkWk}.
\end{proof}

\section{Proper subvarieties and ordinary group polarizations}\label{sec:cofinal}
Throughout this section coefficients are expressions in independent acting variables. They are implemented by ordinary group words as follows. If $c\in R$ has representative $\widehat c=\sum_g\epsilon_g g\in\F[A]$, put
\[
B_c(h_1,h_2;\mathbf x)=\left[\prod_{\epsilon_g=1}h_1^{g(\mathbf x)},h_2\right]
\]
in a fixed order. The variables in $h_1,h_2$ are independent of $\mathbf x$. For $h_1,h_2\in\gamma_3(G_*)\le N(R)$, kernel parts of $g(\mathbf x)$ act trivially modulo $Z(N(R))$. Thus
\begin{equation}\label{paircoordinate}
B_c((\alpha_1,\beta_1,\gamma_1),(\alpha_2,\beta_2,\gamma_2))
=\alpha_1c\mu(\beta_2)+\alpha_2\mu(c)\mu(\beta_1)
\end{equation}
as a central coordinate. This shows that the choice of representative and order does not matter for such a pairing in $G_*$. Every equality of the resulting pure group words is therefore an identity of $\U$. No projection symbol occurs in these words.

\begin{lemma}\label{coordinatewords}
Let $f(x_1,\ldots,x_m)$ be an ordinary group word which is an identity of $A$. Write $x_i=a_i(u_i,v_i,c_i)$. There are coefficients $p_i,d_{ij}\in R$ and $\epsilon_i\in\F$, depending only on $f$ and the acting components, such that
\[
f(\mathbf x)=\left(\sum_i u_i p_i,\sum_i v_i p_i,
\sum_i\epsilon_i c_i+\sum_{i,j}u_i d_{ij}\mu(v_j)\right),
\quad \epsilon_i=\aug(p_i).
\]
If all $p_i=0$, then $\epsilon_i=0$ and $d_{ji}=\mu(d_{ij})$ for every $i,j$.
\end{lemma}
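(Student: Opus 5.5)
Since $f$ is an identity of $A$, the acting component of $f(\mathbf x)$ is trivial and $f(\mathbf x)\in N(R)$. I will determine its three coordinates by induction on the length of $f$, tracking a more general statement. For an arbitrary word $w$ (not necessarily an identity of $A$) write
\[
w(\mathbf x)=\bigl(\textstyle\sum_i u_ip_i^w,\ \sum_i v_ip_i^w,\ \sum_i\epsilon_i^wc_i+\sum_{i,j}u_id^w_{ij}\mu(v_j)\bigr)\,w(\mathbf a),
\]
with $p_i^w\in\F[A]$ (or $R$), $\epsilon_i^w\in\F$ and $d^w_{ij}\in R$ depending only on $w$ and the $a_i$. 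The cases $w=x_i$ ($p_i=1$, $\epsilon_i=1$, $d=0$) and $w=x_i^{-1}$ are immediate: the inverse of $a_in_i$ is $a_i^{-1}n_i^{-1}\cdots$, and the inverse formula $(u,v,c+u\mu(v))$ from Lemma~\ref{heisfacts} gives $p_i=a_i^{-1}$ up to sign, $\epsilon_i=1$, and a diagonal $d_{ii}$.

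For the inductive step I will use the semidirect multiplication rule $n a\cdot n'a'=(n\cdot n'^{a^{-1}})aa'$, the action $(u,v,c)^a=(ua,va,c)$, and the product \eqref{heis}. Multiplying $w_1w_2$, the linear coordinates add after right multiplication of $w_2$'s coefficients by $w_1(\mathbf a)$-related group elements. Thus the $p$'s stay linear, with $u$ and $v$ sharing the same coefficients. The central coordinate acquires the cross term $U_1\mu(V_2')$, which is again of the form $\sum u_id_{ij}\mu(v_j)$ because $\mu$ is an anti-automorphism. The $c_i$ enter only additively with coefficient $1$ per occurrence, because the action fixes the central coordinate. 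Hence $\epsilon_i$ is the exponent sum of $x_i$ modulo $2$.

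Comparing this with $p_i$ gives $\epsilon_i=\aug(p_i)$: by the same induction, $p_i$ is a signed sum of group elements, one per occurrence of $x_i$, so its augmentation is also the exponent sum modulo $2$. At the end I set $w=f$ and use $f(\mathbf a)=1$.

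For the final claim, suppose all $p_i=0$. Then $\epsilon_i=\aug(p_i)=0$. For the symmetry, I compare $f(\mathbf x)$ with its value after simultaneously swapping the roles of $u$ and $v$. The plan is to apply the automorphism $(u,v,c)\mapsto(v,u,c+u\mu(v))$ of $N(R)$. It should commute with the $A$-action and thus extend to an automorphism $\theta$ of $G_*$ fixing $A$. Check: $(u,v,c)(u',v',c')\mapsto$ central coordinate $c+c'+u\mu(v')+(u+u')\mu(v+v')$ versus $v\mu(u')+c+u\mu(v)+c'+u'\mu(v')$. These agree because $u'\mu(v)+u\mu(v')+v\mu(u')$ must match, which requires $v\mu(u')=\mu(u'\mu(v))$ and a trace-type cancellation. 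If $\theta$ turns out not to be a homomorphism, I will use instead the anti-automorphism given by inversion composed with $\mu$.

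Since $f$ is a word, $\theta(f(\mathbf x))=f(\theta\mathbf x)$. With the $p_i$ zero, the left side has central coordinate $\sum u_id_{ij}\mu(v_j)$. The right side has $\sum v_id_{ij}\mu(u_j)+\sum c$-corrections. The corrections $u_i\mu(v_i)$ enter with coefficient $\epsilon_i=0$. Applying $\mu$ termwise and using bilinearity in the independent variables $u_i,v_j$ (both range over $R$, in particular over elements $r a^S$) then gives $d_{ji}=\mu(d_{ij})$.

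I expect the main obstacle to be this symmetry step. It requires a correct symmetry of $G_*$ exchanging the two linear coordinates, and a justification that coefficients can be read off from values. For the latter I will take $u_i=1$, $v_j=1$, and others zero, which extracts $d_{ij}$ exactly.
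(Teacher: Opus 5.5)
Your treatment of the first assertion is essentially the paper's induction: the two row coordinates share the same right coefficients, products and inversions contribute only bilinear cross terms, and $\epsilon_i=\aug(p_i)$ because both sides count the occurrences of $x_i$ modulo $2$. That part is fine. The gap is in the symmetry step, which carries all the content of the second assertion.

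\textbf{The proposed map is not a homomorphism.} The map $(u,v,c)\mapsto(v,u,c+u\mu(v))$ is not a homomorphism of $N(R)$. Your own comparison reduces to whether $v\mu(u')=u'\mu(v)$. But $v\mu(u')=\mu(u'\mu(v))$, and $\mu$ is not the identity on $R$: $\mu(a_1)=a_1^3=a_1+a_1s_1\neq a_1$ by \eqref{pbw}. Concretely, for $n=(0,1,0)$ and $n'=(a_1,0,0)$, the image of $nn'$ has central coordinate $a_1$, while the product of the images has central coordinate $\mu(a_1)$.

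The fallback, ``inversion composed with $\mu$'', is not specified. Its literal reading, applying $\mu$ to the coordinates, does not respect the right $A$-action, since $\mu(ua)=a^{-1}\mu(u)$.

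Moreover, even if your map were an automorphism fixing $A$, the comparison you describe would give $Q(\mathbf u,\mathbf v)=Q(\mathbf v,\mathbf u)$ with $Q=\sum u_id_{ij}\mu(v_j)$. That means $d_{ij}=d_{ji}$. No $\mu$ arises anywhere in that argument, so the step ``applying $\mu$ termwise'' is unjustified.

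\textbf{The missing idea.} You need to twist the central coordinate by $\mu$:
$\theta(u,v,c)=(v,u,\mu(c)+v\mu(u))$.

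Since $\mu(u\mu(v'))=v'\mu(u)$, the central coordinates of $\theta(nn')$ and of $\theta(n)\theta(n')$ both equal $\mu(c)+\mu(c')+v\mu(u)+v'\mu(u')+v\mu(u')$. The map $\theta$ also commutes with the action, because $va\,\mu(ua)=va\mu(a)\mu(u)=v\mu(u)$. Hence it extends to an automorphism of $G_*$ fixing $A$.

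Now suppose all $p_i=0$. Then all $\epsilon_i=0$, so the modified central inputs contribute nothing. The equality $\theta(f(\mathbf x))=f(\theta\mathbf x)$ then reads $\mu(Q(\mathbf u,\mathbf v))=Q(\mathbf v,\mathbf u)$. Evaluating at $u_j=v_i=1$, with all other row entries zero, gives $\mu(d_{ji})=d_{ij}$, which is the claim. This is exactly the paper's argument.
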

\begin{proof}
In the convention $x=an$, multiplication and inversion are
\[
(an)(a'n')=aa'(n^{a'}n'),\qquad
(an)^{-1}=a^{-1}(n^{-1})^{a^{-1}}.
\]
Induct on the construction of the word from variables, products and inverses. The two row coordinates are linear with the same right coefficients. Central coordinates add; multiplication adds a term $u a'\mu(v')$, and inversion adds $u\mu(v)$. Consequently the central expression always has the displayed bilinear form plus scalar multiples of the original $c_i$. Applying augmentation to a row coefficient gives the exponent sum of that variable modulo two. This is also the scalar coefficient of its central coordinate, proving $\epsilon_i=\aug(p_i)$. The acting part is $f(\mathbf a)=1$ by hypothesis. This induction first gives coefficients for the free acting generators and then gives the same formula under every acting substitution, because $T$ is endomorphism invariant.

Consider
\[
\theta(u,v,c)=(v,u,\mu(c)+v\mu(u)).
\]
Direct multiplication shows that $\theta$ is an involutive automorphism of $N(R)$. Indeed the central coordinate of $\theta(nn')$ reduces to the sum of the central coordinates of $\theta(n),\theta(n')$ plus $v\mu(u')$, exactly the required cross term. Also $\theta(n^a)=\theta(n)^a$ because $a\mu(a)=1$. Extend $\theta$ to $G_*$ by fixing $A$.

Suppose $p_i=0$ for every $i$, and put $Q(\mathbf u,\mathbf v)=\sum_{i,j}u_i d_{ij}\mu(v_j)$. Naturality of word evaluation under $\theta$ gives
\[
Q(\mathbf v,\mathbf u)=\mu(Q(\mathbf u,\mathbf v)).
\]
There are no contributions from the changed $c_i$ because all $\epsilon_i=0$. Set $v_i=u_j=1$ and all other row entries to zero; this gives $d_{ij}=\mu(d_{ji})$. The same evaluation with $i=j$ proves the diagonal case. All these evaluations take place with the free acting generators fixed, so the equalities hold in $R$ itself.
\end{proof}

\begin{lemma}\label{D8reduction}
Every nonabelian locally finite variety of finite-exponent $2$-groups contains $D_8$.
\end{lemma}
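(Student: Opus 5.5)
The plan is to reduce to a smallest nonabelian finite group in the variety, normalize it to a two-generator group with a central commutator of order two, and then kill the generator squares inside a direct product with a cyclic group.

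\emph{Step 1: a smallest nonabelian group.} Let $\mathcal V$ be the given variety. Since $\mathcal V$ is nonabelian, the identity $[x_1,x_2]=1$ fails in it, so the $2$-generated relatively free group of $\mathcal V$ is nonabelian. By local finiteness it is also finite, and it is a $2$-group because of the finite $2$-power exponent. Varieties are closed under subgroups and quotients, so among the finite nonabelian members of $\mathcal V$ we may choose one, $H$, of least order. Then every proper subgroup and every proper quotient of $H$ is abelian.

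\emph{Step 2: normal form of $H$.} Minimality first gives $H=\langle u,v\rangle$ for any noncommuting pair $u,v$. Next, $H$ has a unique minimal normal subgroup, namely a central subgroup of order $2$. Since all proper quotients are abelian, this subgroup contains $H'$, so $H'=\langle c\rangle$ with $c=[u,v]$ central of order $2$. Maximal subgroups are abelian, so $u^2$ and $v^2$ commute with both generators and are central. Class two together with $c^2=1$ gives $(uv)^2=u^2v^2c$.

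\emph{Step 3: killing the squares.} The aim is to kill $u^2$ and $v^2$ without killing $c$. Let $C=\langle w\rangle\in\mathcal V$ be cyclic of order $2^n\ge\max(|u|,|v|)$; it is a cyclic subgroup of $H$, or a product of such. In $H\times C\times C$, or more economically in $H\times C$ when $|u|=|v|$, put
\[
x=(u,w_1),\qquad y=(v,w_2),
\]
with $w_1,w_2$ chosen of the same orders as $u,v$. Consider the central subgroup
\[
K=\langle (u^2,w_1^{-2}),\,(v^2,w_2^{-2})\rangle .
\]
Modulo $K$ we get $x^2=y^2=1$, while the commutator $[x,y]=(c,1)$ survives provided $(c,1)\notin K$. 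The images of $x$ and $y$ then generate a group generated by two involutions whose product has order $4$, that is, a copy of $D_8$ inside a section of $H\times C\times C\in\mathcal V$. The model case is $Q_8$: take $H=Q_8$, $C=C_4=\langle t\rangle$, $x=(i,t)$, $y=(j,t)$ and $K=\langle(-1,t^2)\rangle$. Then $(-1,1)\equiv(1,t^2)\ne1$, so $[x,y]$ remains nontrivial.

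\emph{Main obstacle.} The hardest step is verifying $(c,1)\notin K$ in general. The plan is to compute $K\cap(H\times1)$. Its elements have the form $(u^{2a}v^{2b},w_1^{-2a}w_2^{-2b})$ with second coordinate trivial. If $w_1,w_2$ are taken to be independent generators of two distinct cyclic factors, the second coordinate is trivial only when $u^{2a}$ and $v^{2b}$ are trivial. Hence $K\cap(H\times1)=1$ and $(c,1)\notin K$. The only casework left is the choice of orders $|w_1|=|u|$ and $|w_2|=|v|$, which makes the relations consistent. Cyclic groups of these orders lie in $\mathcal V$ because $\langle u\rangle,\langle v\rangle\le H$.
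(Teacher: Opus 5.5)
Your route differs from the paper's and is sound in outline, but Step~3 has a sign slip that makes its central claim false as written.

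\textbf{The slip.} With $x=(u,w_1,1)$ one has $x^2=(u^2,w_1^2,1)=(u^2,w_1^{-2},1)\cdot(1,w_1^4,1)$, so $x^2\in K$ exactly when $(1,w_1^4,1)\in K$. A general element of your $K$ is $(u^{2a}v^{2b},w_1^{-2a},w_2^{-2b})$. A trivial third coordinate gives $v^{2b}=1$. A trivial first coordinate then gives $u^{2a}=1$, hence $w_1^{2a}=1$. So $x^2\in K$ only when $|u|\le 4$. For $|u|\ge 8$ the image $\bar x$ is not an involution, and the dihedral conclusion is not established. Your $Q_8$ model hides this because $t^{-2}=t^2$ there. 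You cannot assume a priori that the least-order $H$ has exponent $4$; that only follows once the lemma is proved.

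\textbf{The repair.} Take $K=\langle x^2,y^2\rangle=\langle(u^2,w_1^2,1),(v^2,1,w_2^2)\rangle$, which is central because $u^2,v^2\in Z(H)$. Your intersection computation in the last paragraph then goes through verbatim, giving $K\cap(H\times1\times1)=1$ and hence $[\bar x,\bar y]=\overline{(c,1,1)}\ne1$. For involutions $[\bar x,\bar y]=(\bar x\bar y)^2$, and $c^2=1$, so $\bar x\bar y$ has order exactly $4$ and $\langle\bar x,\bar y\rangle\cong D_8$.

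Drop the ``more economical'' one-factor variant. There $K$ contains $(u^2v^{-2},1)$, and nothing you prove excludes $u^2v^{-2}=c$, which would kill the commutator. Your independence argument needs two distinct cyclic factors.

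\textbf{Comparison with the paper.} The paper takes any finite nonabelian member and passes to its nonabelian class-two quotient $H/\gamma_3(H)$. It then argues on identities. Collect an identity into generator powers and pairwise commutators. One-variable evaluations make every power exponent divisible by the exponent, which is a multiple of $4$. Evaluation at a noncommuting pair makes every commutator exponent even. Hence the identity holds in $D_8$.

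You instead use closure under subgroups, quotients and finite direct products. Minimality of $|H|$ gives the normal form: $c=[u,v]$ central of order $2$, and $u^2,v^2$ central. You then realize $D_8$ explicitly as a section of $H\times\langle u\rangle\times\langle v\rangle$ by shifting the generator squares into abelian factors.

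The paper's route is shorter and needs no structure theory of minimal nonabelian groups. Yours avoids the collection formula and exhibits a concrete witness for $D_8\in\var(H)$. Minimality is also more than you need. In any nonabelian finite class-two $2$-group with noncommuting $u,v$, the section $\langle u,v\rangle/\langle [u,v]^2\rangle$ already has your normal form, since $[u^2,v]=[u,v]^2$.
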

\begin{proof}
Choose a finite nonabelian group $H$ in the variety. A finite $2$-group is nilpotent: its center is nontrivial by the class equation, and induction through the central quotient gives a central series. Its quotient by $\gamma_3(H)$ is nonabelian: otherwise $\gamma_2(H)=\gamma_3(H)$, and nilpotence would force $\gamma_2(H)=1$. Thus the variety contains a nonabelian class-two finite $2$-group $K$. Such a group has exponent divisible by four, since an exponent-two group is abelian. Collect any identity of $K$ into individual powers and pairwise commutators. Single-variable evaluations force every power exponent to be divisible by $\exp(K)$, and in particular by four. These powers are therefore identically trivial in $K$. Two-variable evaluations of the remaining collected word at a noncommuting pair force every commutator exponent to be even, since a nontrivial commutator has order a positive power of two. Hence each identity of $K$ holds in $D_8$, as required.
\end{proof}

\begin{proposition}\label{cofinal}
Every proper subvariety $\mathcal X<\U$ is contained in some $\U_k$.
\end{proposition}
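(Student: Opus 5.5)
Let $\mathcal X<\U$ be proper. Since $G_*$ generates $\U$, some ordinary group word $u$ is an identity of $\mathcal X$ but not of $G_*$, and it suffices to show that $\Mod(u=1)\cap\U\subseteq\U_k$ for some $k$. In other words, the plan is to derive, inside $\U$, some $b_k=1$ from the single extra law $u=1$.

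\emph{Step 1: the acting-group case.} Suppose $u$ fails already on the acting group, i.e.\ it is not an identity of $A$. Then $\mathcal X$ is contained in the proper subvariety of $\U$ in which the images modulo $N$ satisfy a law not holding in $\D=\var(D_8)$. The quotient $\mathcal X$-groups modulo the relevant verbal subgroup form a proper subvariety of $\D$. By Lemma~\ref{D8reduction}, applied to the locally finite $2$-group quotients, that subvariety is abelian. Concretely, I would show that $[x^{w},y]$-type substitutions force $[a_1,a_2]$ to lie in the kernel part. Then every $s(a_1)s(a_2)$ acts trivially on $\gamma_3$ modulo the center, by \eqref{ring-laws}. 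Hence $b_2=1$ follows, since $[y_1^2,y_2^2]$ acts through $s$-products; at worst one takes a slightly larger $k$.

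\emph{Step 2: $u$ holds in $A$.} Then Lemma~\ref{coordinatewords} writes $u$ coordinatewise with row coefficients $p_i$ and central form $Q$.
\begin{itemize}
\item[(a)] If some $p_i\neq0$, substitute for the other variables $1$ and for $x_i$ an element $n\cdot g$ with $n\in\gamma_3$ built from group words. This yields a law $t^{p}\equiv 1$ modulo the center for a nonzero $p\in R$. Commuting with a further triple value produces a pure law $B_p(t(\mathbf x),t(\mathbf z))=1$.
\item[(b)] If all $p_i=0$, then $Q$ is a nonzero $\mu$-symmetric form by Lemma~\ref{coordinatewords}. Choosing two variables $i,j$ with $d_{ij}\ne0$ and polarizing (substituting triple values for the kernel parts) gives a law $B_d(h_1,h_2)=1$ with $d=d_{ij}\ne0$, or with $d+\mu(d)\ne0$ after using the diagonal.
\end{itemize}
In both subcases the resulting law is a genuine group word by the discussion around \eqref{paircoordinate}.

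\emph{Step 3: the main obstacle.} From a law $[t(\mathbf x)^{c},t(\mathbf z)]=1$ with $0\ne c\in R$, extract a square monomial $s_{1}\cdots s_m$ as the coefficient. Using the basis \eqref{pbw}, write $c=\sum a^S\sigma_S$ and take a head term. I would mimic the extraction of \cite{GK01} (Proposition 4) using independent substitutions $a_j\mapsto a_j a_{j'}$ with fresh variables and summing over substitutions. This is linearization in characteristic $2$, with sums realized as products of group words since the pairing is bilinear. It kills all terms except one of the form $a^S s_I$. Multiplying by further $s$'s through conjugation by squares, and then eliminating $a^S$ by substituting squares for those acting variables (where $a_j^2=1+s_j$, so the highest part gives $s_j$), should reach $[t^{s_1\cdots s_m},t']=1$, which is $b_m=1$ via \eqref{balance}.

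The delicate point is that the eliminations must use only ordinary group substitutions, with no projection available. The kernel/acting split of a substituted variable must therefore be arranged by plugging in squares (acting part in $A^2$) and triple commutators (lying in $N$). I expect the real work is verifying that the head survives this process. For that I would invoke the same head-ordering bookkeeping as in the proof of Theorem~\ref{marked}.
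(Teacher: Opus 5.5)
Your skeleton matches the paper's: Lemma~\ref{D8reduction} for the non-$\D$ case, Lemma~\ref{coordinatewords}, a linear versus quadratic case, a pairing law $B_c=1$, and extraction of a square monomial. However, two steps fail as written.

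\textbf{Step 2(a).} You cannot substitute $1$ for the other variables. The $p_i$ depend on all acting components, and specializing those components can kill $p_i$. For $u=[x_1,x_2,x_2]$, which is a law of $A$ but not of $G_*$, one computes $p_1=s_2(1+s_1+s_1a_2)\neq0$, while $u(x_1,1)$ is the trivial word. Keep every variable free and take the quotient of $[u(\ldots,x_ih,\ldots),t]$ by $[u(\mathbf x),t]$; its value is $B_{p_i}(h,t;\mathbf x)$.

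Two smaller points. In Step 1, $D_8\notin\mathcal X$ directly, so Lemma~\ref{D8reduction} makes $\mathcal X$ itself abelian, giving $\mathcal X\subseteq\U_0$ with no detour through quotients. In Step 2(b), a diagonal coefficient satisfies $d_{ii}=\mu(d_{ii})$, so $d+\mu(d)=0$ there. The four substitutions $x_i,x_ih_1,x_ih_2,x_ih_1h_2$ give $B_{d_{ii}}$ directly.

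\textbf{Step 3.} This is the real gap. It is the heart of the proposition, you leave it at ``should reach'', and the mechanisms you name do not work.
\begin{itemize}
\item Substituting squares for acting variables cannot strip $a^S$. Since $s(a_j^2)=1+a_j^4=0$, the substitution annihilates every $s_j$ with $j\in S$; for example it sends $a_1s_1$ to $0$.
\item The linearization $a_j\mapsto a_ja_{j'}$ does not isolate a parity class. For $j\in S$, the sum of $c$ and its image keeps the $S$-component and adds an $(S\cup\{j'\})$-component.
\item The head ordering of Theorem~\ref{marked} lives on $D_k$ and does not address this extraction.
\end{itemize}

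The missing idea is as follows. Let $J\subseteq R$ be the set of $c$ for which $B_c(h_1,h_2;\mathbf x)=1$ holds in $\mathcal X$ with triple-word slots. Prove that $J$ is a two-sided, $\operatorname{End}(A)$-invariant ideal. Sums in $J$ correspond to products of central pairings. The identities $B_{gc}(h_1,h_2)=B_c(h_1^g,h_2)$ and $B_{cg^{-1}}(h_1,h_2)=B_c(h_1,h_2^g)$ hold in $\U$, so conjugating slot variables multiplies by group units on either side.

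With this in hand, write $c=\sum_e a^ep_e(s_1,\ldots,s_n)$ and set $z_i=a_{n+i}^2$.
\begin{itemize}
\item The endomorphism $a_i\mapsto a_iz_i$ fixes every $s_j$ and multiplies the parity-$e'$ part by $z_i^{e'_i}$. So its image plus $c$, or its image plus $cz_i$, retains a single parity with factor $s_{n+i}$.
\item After doing this for all $i$, multiply by the unit $(a^e)^{-1}$. This is legitimate only because $J$ is an ideal.
\item Then multiply by $\prod_{i\le n,\,i\notin E}s_i$, where $E$ is an inclusion-minimal monomial support of $p_e$.
\end{itemize}
This yields $s_1\cdots s_{2n}\in J$, and its pairing word is exactly $b_{2n}$.
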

\begin{proof}
Choose $f\in\Id(\mathcal X)\setminus\Id(\U)$. If $f$ is not an identity of $\D$, then $D_8\notin\mathcal X$. Lemmas~\ref{heisfacts} and~\ref{D8reduction} imply that $\mathcal X$ is abelian, so $\mathcal X\subseteq\U_0$. We may therefore suppose that $f$ is an identity of $A$ and use Lemma~\ref{coordinatewords}.

First suppose $p_i\ne0$. For independent triple words $h,t$, the words
\[
[f(\mathbf x),t]\qquad\hbox{and}\qquad
[f(x_1,\ldots,x_i h,\ldots,x_m),t]
\]
have central values in $G_*$. Their quotient has value $B_{p_i}(h,t;\mathbf x)$. Indeed the substitution keeps every acting component unchanged and adds the row coordinates of $h$ to those of the $i$th kernel input; the commutator formula uses only those rows. The quotient is a pure consequence of $f=1$, and its equality to $B_{p_i}$ is an identity of $\U$. Thus $\mathcal X$ satisfies a law $B_c(h,t;\mathbf x)=1$ with $0\ne c\in R$.

Suppose instead all $p_i=0$. The value of $f$ is the central coordinate
\[
Q=\sum_i u_i d_{ii}\mu(v_i)+\sum_{i<j}
\bigl(u_i d_{ij}\mu(v_j)+u_j\mu(d_{ij})\mu(v_i)\bigr).
\]
At least one $d_{ij}$ is nonzero, for otherwise $f$ would be an identity of $G_*$. If $d_{ii}\ne0$, take the product of the four word values obtained by replacing $x_i$ by
\[
x_i,\quad x_i h_1,\quad x_i h_2,\quad x_i h_1h_2,
\]
with independent triple words $h_1,h_2$. Since these values are central involutions, this product implements addition of their central coordinates. Linear and other-variable terms cancel. Its value is
\[
\alpha_1d_{ii}\mu(\beta_2)+\alpha_2d_{ii}\mu(\beta_1)
=B_{d_{ii}}(h_1,h_2),
\]
using $d_{ii}=\mu(d_{ii})$. If a mixed coefficient $d_{ij}\ne0$ is selected, use the four independent choices of whether to replace $x_i$ by $x_i h_1$ and $x_j$ by $x_j h_2$. The result is
$\alpha_1d_{ij}\mu(\beta_2)+\alpha_2\mu(d_{ij})\mu(\beta_1)=B_{d_{ij}}(h_1,h_2)$.
The extra central coordinates introduced in multiplying kernel inputs do not enter $Q$. Every operation just described is substitution and multiplication of pure group consequences of $f=1$. Hence again
\begin{equation}\label{bilaw}
B_c(h_1,h_2;\mathbf x)=1\quad\hbox{in }\mathcal X,
\qquad 0\ne c\in R.
\end{equation}

This restriction to triple words does not make $B_c$ an identity of $\U$. In $G_*$ take
\[
h_1=[(1,0,0),a_r,a_s],\qquad h_2=[(0,1,0),a_t,a_u],
\]
where $r,s,t,u$ are four distinct indices absent from $c$. The value is
$(1+a_r)(1+a_s)c\,\mu((1+a_t)(1+a_u))$. In the direct sum \eqref{pbw}, its component with all four new parity bits zero is precisely $c$. Any other summand has a nonzero new parity bit, even after reordering, so the value is nonzero.

Let $J\subseteq R$ be the set of coefficients for which \eqref{bilaw} holds for independent triple words and arbitrary coefficient-variable substitutions throughout $\mathcal X$. We justify all closure operations. Triple values generate $\gamma_3$, and their conjugates are triple values. On $\U$, the pairing is additive in each slot and central, so the law extends to arbitrary elements of $\gamma_3$. Addition of coefficients corresponds to multiplication of these central pairings. For any group word $g(\mathbf x)$ in the coefficient variables, the identities
\[
B_{gc}(h_1,h_2)=B_c(h_1^{g},h_2),\qquad
B_{cg^{-1}}(h_1,h_2)=B_c(h_1,h_2^{g})
\]
hold in $\U$, by conjugation balancing. Conjugating the independent triple-slot variables is an allowed group substitution; hence both products belong to $J$ if $c$ does. Group units span $R$ additively, so $J$ is a two-sided ideal. Finally any endomorphism $a_i\mapsto w_i(\mathbf a)$ of $A$ is realized on coefficients by the pure substitutions $x_i\mapsto w_i(\mathbf x)$, leaving the slot variables untouched. Thus $J$ is $\operatorname{End}(A)$-invariant. This proves the closure assertions without supplying a projection operation to a group in $\mathcal X$.

Write a nonzero member of $J$ as $c=\sum_e a^e p_e(s_1,\ldots,s_n)$ and choose $p_e\ne0$. Put $z_i=a_{n+i}^2$. The endomorphism $\theta_i:a_i\mapsto a_i z_i$ fixes every old $s_j$, and multiplies a parity-$e'$ term by $z_i^{e'_i}$. If the desired $e_i=1$, use $\theta_i(c)+c$; if $e_i=0$, use $\theta_i(c)+cz_i$. In either case the desired parity survives with multiplier $1+z_i=s_{n+i}$ and the other parity vanishes. The operations at distinct $i$ fix each other's new multipliers. After all $n$ operations, multiplying by the inverse of the unit $a^e$ yields
\[
p_e(s_1,\ldots,s_n)s_{n+1}\cdots s_{2n}\in J.
\]
Choose an inclusion-minimal support $E$ of a monomial of $p_e$ and multiply by $\prod_{i\le n,\,i\notin E}s_i$. A different support could survive only if it were a proper subset of $E$, which minimality excludes. The result is $s_1\cdots s_{2n}\in J$. Its pairing word is precisely $b_{2n}$, since successive commutators with squares multiply the first coefficient by $a_i^2-1=1+a_i^2$. Therefore $\mathcal X\subseteq\U_{2n}$. If $n=0$, the nonzero coefficient is $1$ and the same argument gives $b_0=1$.
\end{proof}

\section{Non-finite basability}\label{sec:nonfinite}
We retain the bigroup signature while constructing the witnesses and pass to ordinary group identities only after proving an explicit word calculation.

\begin{lemma}[The source witnesses and their central square]\label{sourcewitness}
For each integer $n>12$ there is a bigroup $H_n=N_n\rtimes A$ generated by $A$ and the $A$-conjugates of one kernel generator $b$, with the following properties. It satisfies $N_n,A\in\D$, centrality of $N_n^2$, and centrality of all $r(d;a,a')$. It satisfies
\[
w_j(d;g_1,\ldots,g_j)=[d^{s(g_1)\cdots s(g_j)},d]=1
\quad(j\ge1,\ j\ne n)
\]
for arbitrary $d\in N_n$, $g_i\in A$. All $(b^g)^2$ equal the same central involution $b^2$. This involution and
\[
c_n=[b^{s_1\cdots s_n},b]
\]
are linearly independent in the central elementary abelian subgroup $N_n^2$. Consequently $\widehat H_n=H_n/\langle b^2\rangle$ has $c_n\ne1$ and satisfies the same structural and excluded-degree identities.
\end{lemma}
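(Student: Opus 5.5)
Following the title of the lemma, I would take the source witnesses from the non-finite-basis part of \cite{GK01} rather than build new groups. The Main Theorem there is proved by exhibiting, for each large $n$, a bigroup in which all defining laws except $w_n$ hold and $w_n$ fails. I will recall that bigroup as $H_n=N_n\rtimes A$. Here $N_n$ is the relatively free object, on one generator $b$ as an $A$-operator group, in the kernel variety cut out by the structural conditions and by $w_j=1$ for $j\ne n$. The bound $n>12$ is whatever threshold the source needs for its independence argument, and I would simply carry it over.

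Membership in $\D$ for $N_n$ and $A$, centrality of $N_n^2$, centrality of the $r$-values, and the laws $w_j$ for $j\ne n$ then hold by construction. Generation by $A$ and the conjugates $b^g$ is immediate from the presentation. For the claim that all $(b^g)^2$ coincide, I would argue that $(b^g)^2=(b^2)^g$, and that $b^2$ is central in the whole bigroup (not only in $N_n$) because $N_n^2\le Z(H_n)$ is among the structural conditions. So conjugation by $g$ fixes $b^2$.

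The substantive step is linear independence of $b^2$ and $c_n$ in $N_n^2$. That $c_n\ne1$ is exactly the failure of $w_n$, which is the source's witness statement. It remains to exclude $c_n=b^2$. I would use a degree or grading argument. $N_n^2$ is spanned by squares and commutators of the $b^g$, so I would give it a model as a quotient of a free construction modeled on the Heisenberg product of Lemma~\ref{heisfacts}: a central coordinate space graded into a scalar part recording $b^2$ and a bilinear part recording $[b^f,b^{f'}]$. In that model $c_n$ lives purely in the bilinear part, and its coefficient $s_1\cdots s_n$ has augmentation zero. The square $b^2$, on the other hand, has a nonzero scalar component. Concretely, I would seek a homomorphism from $N_n^2$ to $\F$ that kills every commutator $[b^f,b^{f'}]$ and sends $b^2$ to $1$; this would come from the abelianization of $N_n$ modulo commutators. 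Such a map exists if $b$ maps to an element of order four in $N_n/N_n'$. That holds because a cyclic $\mathbb Z/4$ quotient, with $A$ acting trivially, satisfies every defining law: every $w_j$ and $r$-value becomes a commutator in an abelian group, or a square-coefficient expression that vanishes. Evaluating the map on $c_n$ gives $0$, and on $b^2$ gives $1$. This distinguishes them, provided $c_n\ne1$, which gives independence.

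Finally, passing to $\widehat H_n=H_n/\langle b^2\rangle$, independence gives $c_n\notin\langle b^2\rangle$, so $c_n\ne1$. The structural conditions and the laws $w_j$ for $j\ne n$ are identities, so they pass to quotients. Since $\langle b^2\rangle$ is a central subgroup of the kernel and is $A$-invariant, the split structure and projection descend. The main obstacle I expect is matching the source's construction precisely enough to justify $c_n\ne1$ and the $\mathbb Z/4$ quotient. If the source's witness already has $b^2=1$, independence is trivial; otherwise the functional argument above is the fallback.
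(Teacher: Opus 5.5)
Your proof establishes the lemma as literally stated, but by a different route and for a different group than the paper's. You take $H_n$ to be the relatively free bigroup on one kernel generator $b$ in the variety $\LL^{(n)}$ defined by the structural identities and $w_j=1$ ($j\ne n$). Then $c_n\ne1$ follows by transferring the failure of $w_n$ in the source witness to the generic point; this is legitimate because $w_n$ has a single kernel variable. Independence comes from your functional $N_n^2\to\F$ induced by the quotient $\mathbb Z/4\times A$ with trivial action, which kills every commutator but not $b^2$. This is correct and economical, and it needs from \cite{GK01} only the bare independence of $w_n$. The paper instead keeps the explicit group $H/N^{(6)}$ of \cite{GK01}, Steps 1--6. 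It reads off independence from the Step~5 basis $\{b^2\}\cup\{[b^{s_{i_1}\cdots s_{i_n}},b]\}$ of $N_0^2/N^{(5)}$, together with $N^{(6)}\cap N_0^2=N^{(5)}$ (Lemma 13 there).

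Your sentence identifying the source witness with that relatively free object is wrong, and the difference matters downstream. Step 1 of the source kills $[b^{a},b^{a'}]$ whenever $a(a')^{-1}\notin A^2$. In your group this fails. Since $G_*\in\LL\subseteq\LL^{(n)}$, there is a bigroup map with $b\mapsto(1,1,0)$ and $b^2\mapsto(0,0,1)$. It induces a map $\widehat H_n\to G_*/\langle(0,0,1)\rangle$ sending $[b^{a_1},b]$ to the class of $(0,0,a_1+a_1^{-1})=(0,0,a_1s_1)$. This class is nontrivial because $a_1s_1\notin\{0,1\}$ by \eqref{pbw}. So the surjection $W/D_n\to\widehat H_n$ of Lemma~\ref{witnessbridge}, which uses the relations of Steps 1, 2 and 4, does not exist for your group. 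The import of the $v_k$ calculation from \cite{K03} would then have to be redone.

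The clean repair is to keep the source group and apply your trick there. As the paper describes them, the relators of Steps 1--6 are kernel commutators, quotients $(b^g)^2b^{-2}$, or $r$-values, and all of these die in $\mathbb Z/4\times A$. So your functional still separates $b^2$ from $c_n$. Only $c_n\ne1$ in $H/N^{(6)}$ must then be taken from the source, and that rests on Lemma 13 again.
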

\begin{proof}[Source construction and the additional quotient]
Use \cite[Section 2, Steps 1--6; Lemmas 10 and 13 and its corollary; Lemmas 16--18]{GK01}. To specify exactly the construction being used, begin with $H=N_0\rtimes A$, where $N_0$ is relatively free in $\D$ on $b^g$ for $g\in A$. The six subgroups of the source will be denoted $N^{(i)}$. Steps 1 and 2 kill kernel commutators between different $A^2$-cosets and identify the remaining commutators under simultaneous $A$-conjugation. Step 3 identifies $(b^g)^2$ with $b^2$. In particular $N_0^2/N^{(3)}$ is central. Step 4 kills the diagonal commutators coming from $T$: those supported outside $A^2$ were already killed in Step 1, and the central components are precisely the additional relations of Step 4. Step 5 kills the square-monomial diagonals of every degree other than $n$.

The basis asserted at Step 5, p.~4022, is
\[
\{b^2N^{(5)}\}\ \cup\
\{[b^{s_{i_1}\cdots s_{i_n}},b]N^{(5)}:i_1<\cdots<i_n\}
\]
for $N_0^2/N^{(5)}$. Step 6 adjoins all $A$-conjugates of $r(b;a_i,a_j)$. Lemma 13, pp.~4024--4026, states
\begin{equation}\label{witnessintersection}
N^{(6)}\cap N_0^2=N^{(5)}.
\end{equation}
Its proof checks that the natural map from $N^{(6)}/N^{(5)}$ to its image in $N_0/N_0^2$ is injective: Lemmas 14--15 give the same basis, indexed by a basis of $T$, in source and image. Lemmas 11--12 first ensure that this is a map between elementary abelian groups. Thus \eqref{witnessintersection} preserves the entire displayed square-subgroup basis on passing to $H_n=H/N^{(6)}$.

Lemma 10 checks excluded degrees for arbitrary kernel and acting inputs, not just for the named generators. Its proof collects an arbitrary kernel input into conjugates of $b$; paired mixed terms cancel, and the remaining acting coefficient reduces modulo $T$ to monomials of the same degree. Lemmas 16--18 extend the Step 6 relations from basis acting elements to arbitrary inputs and verify centrality of the resulting $r$-values. The completion of Proposition 1, p.~4029, combines these with the earlier steps to give the structural identities. All hypotheses, in particular $n>12$, are retained here.

Finally, by \eqref{witnessintersection}, $c_n$ is a different basis vector from $b^2$. Quotienting by the one-dimensional central subgroup generated by $b^2$ therefore does not kill $c_n$. This last quotient and its use below are our additional step. The source identifier is GK01, DOI 10.1081/AGB-100105987; the precise result identifiers are those given above, with no established arXiv identifier.
\end{proof}

For $k\ge3$ define the ordinary group word
\[
v_k=[[x_1,x_2,x_3],x_1,x_1^2,x_4^2,\ldots,x_k^2,
[x_1^{-1},x_2^{-1},x_3^{-1}]],
\]
where the list $x_4^2,\ldots,x_k^2$ is empty for $k=3$.

\begin{lemma}\label{witnessbridge}
For the quotients of Lemma~\ref{sourcewitness},
\begin{equation}\label{pure-witness}
\widehat H_n\models v_j=1\quad(j\ge3,\ j\ne n),\qquad
v_n(a_1b,a_2,\ldots,a_n)=c_n\ne1.
\end{equation}
\end{lemma}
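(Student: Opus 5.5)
The plan is to compute both sides of \eqref{pure-witness} in the bigroup coordinates of $\widehat H_n=N_n\rtimes A$ and then read off the answer with the coefficient calculus of Lemma~\ref{ringbasis}. Modulo the central elementary abelian subgroup $N_n^2$, the kernel is a right module over $R$. This uses centrality of the $r$-values and the fact that $A\in\D$, exactly as in Lemma~\ref{structure}. The commutator pairing $[d^f,e]$ with values in $N_n^2$ obeys the balancing rule \eqref{balance}.

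\textbf{Step 1: both triples lie in the kernel.} Since $A$ has class two, both triple commutators in $v_j$ have trivial acting component and lie in $N_n$. Write $x_i=a_in_i$. Expanding by \eqref{tri1}--\eqref{tri3} and the multiplication rule of Lemma~\ref{coordinatewords}, I expect the first triple to equal $d=\sum_i n_i\kappa_i$ modulo $N_n^2$, with $\kappa_i$ in the augmentation ideal of $R$. I expect the second triple, built from the inverses $x_i^{-1}$, to be $\sum_i n_i\kappa_i'$, where $\kappa_i'$ is obtained from $\kappa_i$ by inverting the acting variables. This mirror symmetry is the role of the inverses in $v_k$: it is meant to let the second triple be rewritten, via $\mu$, as the first triple times a coefficient.

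\textbf{Step 2: the middle commutators.} Next I account for the intermediate commutators. Modulo $N_n^2$, commutation of a kernel element with $x_1$ multiplies by $1+a_1$, because the kernel parts of $x_1$ only contribute central terms. Commutation with $x_i^2$ multiplies by $1+a_i^2=s(a_i)$. Hence $v_j$ becomes a pairing $[d^{\,\omega},d']$ with
\[
\omega=(1+a_1)\,s(a_1)s(a_4)\cdots s(a_j).
\]
Balancing turns this into sums of diagonal and polarized terms $[n_p^{\,c},n_q]$, with coefficients
\[
c=\kappa_p\,\omega\,\mu(\kappa_q').
\]

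\textbf{Step 3: reduce to degree-$j$ square monomials.} I then reduce each $c$ with the laws \eqref{ring-laws}. Typical identities are $(1+a)\mu(1+a^{-1})=s(a)$ and $(1+a)(1+a^{-1})=a^{-1}s(a)$. The goal is that every surviving term is a unit times a square monomial of degree exactly $j$ in the acting inputs. The terms $p\ne q$ are then killed by polarizing the excluded-degree laws $w_j=1$ of Lemma~\ref{sourcewitness}; these laws hold for arbitrary kernel inputs and pass to $\widehat H_n$, and linearize because $N_n^2$ is central elementary abelian. The diagonal terms are killed by the $w_j$ directly. This proves $v_j=1$ for $j\ne n$. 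For the evaluation $x_1=a_1b$ and $x_i=a_i$ ($i\ge2$), only $b$ carries a kernel part. The same computation then collapses to a single term $[b^{u\,s_1s_2\cdots s_n},b]$ with $u$ a unit. Modulo the degree-$(n+1)$ relations, the ring laws reduce this to $c_n$, and $c_n\ne1$ by Lemma~\ref{sourcewitness}.

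\textbf{Main obstacle.} The hard part is the degree bookkeeping in Step 3. The factor $1+a_1$ is not in $P$, and the factor $s(a_1)$ is already present. So I must verify two things. First, the two triples really contribute the remaining square factors $s_1,s_2,s_3$, with $s_1$ coming from $(1+a_1)$ combined with its mirror in the second triple, and not a vanishing $s_1^2$. Second, the leftover terms of the shape $a\,s(\cdot)$ either have square-degree $j$ or pair to zero. I would first do the $n$-evaluation explicitly to fix the exact shape of the coefficient. I would then reproduce that shape for general inputs, using multilinearity of the kernel parts.
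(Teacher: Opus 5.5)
\textbf{Gap: Step 3 fails as stated.} Your Step 2 coefficient $\omega=(1+a_1)s(a_1)s(a_4)\cdots s(a_j)$ agrees with the paper's $d'$. The paper never computes in $\widehat H_n$, though: it maps Krasilnikov's $W/D_n$ onto $\widehat H_n$ after checking the defining relators, and imports \eqref{K03exact}. In your direct route, Step 3 carries all the content, is not carried out, and the mechanism you propose for it cannot work.

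Polarizing $w_j$ in the kernel variable yields nothing. Since $s(g^{-1})=s(g)$ and the $s$'s are central, $\mu(\sigma)=\sigma$ for $\sigma=s(g_1)\cdots s(g_j)$. By \eqref{balance} and exponent two, the cross term $[d_1^{\sigma},d_2][d_2^{\sigma},d_1]=[d_1^{\sigma+\mu(\sigma)},d_2]$ is therefore identically $1$. Worse, a mixed pairing whose coefficient is a degree-$j$ square monomial is genuinely nontrivial in $\widehat H_n$. Take $j=3<n$, $n_p=b$ and $n_q=b^{g}$ with $g=a_4^2\cdots a_n^2=\prod_{i=4}^n(1+s_i)$. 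Then $[b^{s_1s_2s_3},b^{g}]=[b^{s_1s_2s_3g},b]$, and after discarding all degrees $\ne n$ this equals $c_n\ne1$. So reducing mixed terms to ``unit times a degree-$j$ monomial'' cannot finish the proof; they must vanish outright.

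\textbf{The missing computation.} The mixed terms do vanish outright, in $R$. Once the central factor $s(a_1)$ of $\omega$ is present, $a_1$ may be treated as central, because $[g,a_1]=1+s(g)s(a_1)$. Also $(1+a_1)^2=s(a_1)$ and $s(a_1)^2=0$.
\begin{itemize}
\item The coefficients of $n_2,n_3$ in $[x_1,x_2,x_3]$ are $(1+a_2^{-1}a_1a_2)(1+a_3)$ and $1+[a_1,a_2]=s(a_1)s(a_2)$. The mirrored coefficients in the inverse triple have the same shape. Note that your $\kappa'_i$ also needs a left factor $a_i^{-1}$, coming from the kernel part $n_ia_i^{-1}$ of $x_i^{-1}$.
\item Multiplied against $(1+a_1)s(a_1)$, each of these produces $s(a_1)^2=0$. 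Hence every $C_{pq}$ with $(p,q)\ne(1,1)$ is $0$.
\item For $\kappa_1=([a_1,a_2]+a_2)(1+a_3)$, the rules $(1+a_i)^2=s(a_i)$ and $a_1^2s(a_1)=s(a_1)$ give $C_{11}=(1+a_1)\,s(a_1)\cdots s(a_j)$. This is not a unit multiple of $s(a_1)\cdots s(a_j)$.
\item The summand $a_1s(a_1)\cdots s(a_j)$ is not removed by $w_j$. It is the trace $e+\mu(e)$ with $e=a_1s(a_2)\cdots s(a_j)$, and traces pair trivially on the diagonal by \eqref{balance}.
\end{itemize}
Thus $v_j(a_1n_1,\ldots,a_jn_j)=w_j(n_1;a_1,\ldots,a_j)$ in $\widehat H_n$. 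This gives both halves of \eqref{pure-witness} at once, taking $n_1=b$ for the second. With this identity supplied, your route would be a self-contained substitute for the appeal to \cite{K03}. As written, the proposal leaves exactly this computation undone and assigns it a mechanism that fails.
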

\begin{proof}
We verify the bridge to \cite{K03}, rather than identifying two differently presented witnesses. In that paper $W=B\rtimes A$, where $B$ is the free class-two group generated by involutions $z^a$, $a\in A$, with the natural shift action. Such a group has exponent dividing four. Let $U_1$ kill commutators $[z^a,z^{a'}]$ with $a(a')^{-1}\notin A^2$; let $U_2$ further kill
\[
[z^a,z^{a'}]^g[z^a,z^{a'}]\quad(a,a',g\in A);
\]
let $U_3$ further kill $[z^t,z]$ for $t\in T$. Finally $D_n$ is generated by $U_3$ and
\[
[z^{s(g_1)\cdots s(g_m)},z]\quad(m\ge1,\ m\ne n,\ g_i\in A).
\]
These definitions are in \cite[Section 2.1, pp.~373--374]{K03}. All the added relations lie in the kernel commutator subgroup and are normal in $W$.

Define a map from $W$ by fixing $A$ and sending $z^a$ to $b^a\langle b^2\rangle$. Kernel class-two relations hold because $N_n\in\D$; generator squares vanish by Step 3 and the quotient by $b^2$; the shift action is preserved on each generator. This defines a surjection $W\to\widehat H_n$. Its kernel contains $U_1$ by Step 1, $U_2$ by Step 2, and $U_3$ by Step 4 of the source construction. Each remaining generator of $D_n$ vanishes by the arbitrary-input assertion of GK01 Lemma 10. Hence we have a surjection
\begin{equation}\label{witnessmap}
W/D_n\longrightarrow\widehat H_n.
\end{equation}
Notice that this argument checks the defining relators before invoking any word identity of $W/D_n$.

Here is the exact external calculation used. For arbitrary $X_i=a_i b_i\in W$, write $b_1=z^{d_1}\pmod{B'}$, $d_1\in\F[A]$, and put $\ell=\aug(d_1)\in\F$. Then \cite[Lemma 3, Corollary 1, and equation (12), pp.~376--381]{K03} gives
\begin{equation}\label{K03exact}
v_k(X_1,\ldots,X_k)D_n
=[z^{s(a_1)\cdots s(a_k)},z]^{\ell}D_n
\quad(k\ge3).
\end{equation}
For clarity, the mechanism in that calculation is as follows. Collect the first triple modulo $B'$ as $z^d$, and the inverse-variable triple as $z^{d''}$. After the intervening commutators, the first slot is
\[
z^{d'},\qquad d'=d(1+a_1)s(a_1)s(a_4)\cdots s(a_k).
\]
Centrality of $B'/U_2$ balances the two slots, so $v_k=[z^{d'\mu(d'')},z]$ modulo $D_n$. Lemma 3 proves in $\F[A]$ that
\[
d'\mu(d'')=\ell\,s(a_1)\cdots s(a_k)
+\sum_h(h+h^{-1})+t,\qquad t\in T,
\]
with a finite sum. The $T$ part vanishes by $U_3$. The paired terms vanish because
$[z^{h^{-1}},z]=[z^h,z]$ modulo $U_2$ and these commutators have exponent two. This proves \eqref{K03exact}. In the proof of Lemma 3, the useful identities are that $s(a)$ and $(1+a)s(a)$ are central modulo $T$, that $s(a)(1+[a,a'])=0$ there, and that $d_1\mu(d_1)$ equals its augmentation plus pairs $h+h^{-1}$. These identities account for the disappearance of the other kernel coefficients; thus the coefficient $\ell$ is exactly the augmentation of $d_1$.

If $k\ne n$, the right side of \eqref{K03exact} is trivial by the definition of $D_n$, proving the corresponding universal identity and its image under \eqref{witnessmap}. For $X_1=a_1z$, $X_i=a_i$ for $i>1$, one has $d_1=1$ and $\ell=1$. Its image is $c_n$, nontrivial by Lemma~\ref{sourcewitness}. This proves both assertions in \eqref{pure-witness}. The source identifier is K03, DOI 10.1112/S0024610703004319, with result identifiers as stated above and no established arXiv identifier.
\end{proof}

\begin{lemma}[Descending-chain compactness]\label{chaincompact}
For a descending sequence of varieties $\mathcal V_1\supseteq\mathcal V_2\supseteq\cdots$ in either of the signatures used here,
\[
\Id\Bigl(\bigcap_N\mathcal V_N\Bigr)=\bigcup_N\Id(\mathcal V_N).
\]
If the intersection has a finite identity basis, it equals some $\mathcal V_N$.
\end{lemma}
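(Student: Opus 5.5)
The plan is to prove the two inclusions of the identity equality separately, using only the Galois correspondence between varieties and equational theories together with the finitary nature of equational derivations.

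\emph{The inclusion ``$\supseteq$''.} Each $\mathcal V_N$ contains $\bigcap_M\mathcal V_M$, so every identity of $\mathcal V_N$ holds in the intersection. This gives $\bigcup_N\Id(\mathcal V_N)\subseteq\Id(\bigcap_N\mathcal V_N)$.

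\emph{The inclusion ``$\subseteq$''.} Put $\Sigma=\bigcup_N\Id(\mathcal V_N)$.
\begin{enumerate}[label=(\roman*)]
\item Since the $\mathcal V_N$ descend, the theories $\Id(\mathcal V_N)$ ascend. So $\Sigma$ is a directed union of fully invariant congruences on the countably generated absolutely free algebra of the relevant signature. For groups these are fully invariant normal subgroups of the free group. For bigroups they are the corresponding fully invariant congruences of the free algebra with a distinguished idempotent endomorphism.
\item Closure under each deduction rule involves only finitely many premises. These are reflexivity, symmetry, transitivity, compatibility with the operations, and substitution. Hence a directed union of deductively closed sets is again deductively closed, and $\Sigma$ is an equational theory.
\item By Birkhoff's theorem, $\Sigma=\Id(\Mod\Sigma)$.
\item A structure lies in $\Mod\Sigma$ exactly when it satisfies $\Id(\mathcal V_N)$ for every $N$, that is, when it lies in every $\mathcal V_N$. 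So $\Mod\Sigma=\bigcap_N\mathcal V_N$.
\end{enumerate}
Combining (iii) and (iv) gives $\Id(\bigcap_N\mathcal V_N)=\Sigma$, which proves the displayed equality.

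\emph{The finite-basis consequence.} Suppose the intersection has a finite identity basis $e_1,\dots,e_m$. Each $e_i$ lies in $\Sigma$, so it lies in some $\Id(\mathcal V_{N_i})$. Because the theories ascend, all the $e_i$ lie in $\Id(\mathcal V_N)$ with $N=\max_iN_i$. Then $\mathcal V_N\subseteq\Mod\{e_1,\dots,e_m\}=\bigcap_M\mathcal V_M\subseteq\mathcal V_N$, so the intersection equals $\mathcal V_N$.

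The only point needing any care is step (i)--(ii) in the bigroup signature. There one must check that Birkhoff's completeness theorem applies to the chosen presentation of bigroups. Taking bigroups as groups with an extra unary operation $p$ subject to the equations $p^2=p$ and $p(xy)=p(x)p(y)$ makes them an ordinary one-sorted equational class. Alternatively, with the sorted presentation of Theorem~\ref{GK01exact}, one uses many-sorted Birkhoff, whose derivations are equally finitary. Everything else is the standard compactness argument already sketched in the introduction.
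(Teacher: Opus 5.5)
Your proof is correct and follows the paper's argument. The ascending theories form a directed union of fully invariant congruences, closed under the finitary inference rules, whose model class is the intersection; a finite basis then lies at a single stage. The only difference is that you cite Birkhoff's completeness theorem where the paper argues directly with the free algebra modulo the union congruence, which is the standard proof of that same completeness step.
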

\begin{proof}
The identity theories increase. Their union is closed under substitution and every equational inference rule: each application uses finitely many premises, which belong to one common stage. Equivalently, their fully invariant congruences on the absolutely free algebra form a directed union, again a fully invariant congruence. The free algebra modulo that congruence has exactly its prescribed identities, proving that the union is the full theory of its class of models. That model class is the displayed intersection. A finite basis lies at a common stage, forcing equality of the stage with the intersection. The directedness is essential; this assertion is not being used for an arbitrary family of varieties.
\end{proof}

\begin{theorem}\label{nonfinite}
The variety $\U$ is not finitely based.
\end{theorem}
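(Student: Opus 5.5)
The plan is to argue by contradiction with the witnesses of Lemma~\ref{witnessbridge}. The first step is to show that each word $v_j$ ($j\ge3$) is an identity of $\U$. By \eqref{reductid} it suffices to check $v_j=1$ in $G_*$. Alternatively, $G_*$ satisfies every $w_j$, and \eqref{K03exact} reduces $v_j$ to a diagonal law of the form $[z^{s(a_1)\cdots s(a_j)},z]^\ell$. I would check this directly in $G_*$. The evaluation lands in the central coordinate, where $a_i^2-1=s(a_i)$ acts. The two triple slots are balanced by \eqref{balance}, giving a coefficient $d'\mu(d'')$. Modulo $T$, Lemma~3 of \cite{K03} rewrites this as $\ell s(a_1)\cdots s(a_j)$ plus pairs $h+h^{-1}$ plus an element of $T$. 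In $G_*$ the pairing with coefficient $f$ is symmetric under $f\mapsto\mu(f)$ in characteristic two, so the paired terms contribute zero. What remains is $[n^{s(a_1)\cdots s(a_j)},n]$ for a kernel element $n=(u,v,c)$. This equals $u\sigma\mu(v)+v\mu(\sigma)\mu(u)$ with $\sigma=s(a_1)\cdots s(a_j)$ central and $\mu$-fixed, and it vanishes only if $u\sigma\mu(v)$ is $\mu$-symmetric. Since this is delicate, the cleaner route is to invoke Proposition~\ref{bigroup-generation}. $G_*$ lies in $\LL$, whose defining laws include all $w_q$, and $v_j$ is a pure group word. So the K03 calculation \eqref{K03exact}, performed in any bigroup of $\LL$ after killing squares of kernel generators, shows $v_j=1$. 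I would handle the square issue by noting that the value of $v_j$ is $\ell$ times a $w_j$-value, plus terms killed by the structural laws of $\LL$. Then $v_j\in\Id(\LL)\cap\{\text{pure group words}\}=\Id(\U)$.

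Next, suppose $\U$ has a finite basis $\Sigma$. I would choose a finite set $\Sigma_0$ of pure identities of $\U$ from which everything follows: $\Sigma_0$ consists of \eqref{W1}--\eqref{W5} together with finitely many $v_j$ and the finitely many further identities needed. The cleanest organisation uses Lemma~\ref{chaincompact}. Let $\mathcal V_N$ be the variety defined by \eqref{W1}--\eqref{W5}, $\D\D$-type laws holding in $\U$, and all identities of $\U$ in at most $N$ variables. Then $\bigcap_N\mathcal V_N=\U$, so a finite basis would give $\U=\mathcal V_N$ for some $N$. The task is then to produce, for each large $N$, a group in $\mathcal V_N\setminus\U$. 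For this I take $\widehat H_n$ with $n>\max(N,12)$, viewed as an ordinary group.

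The main obstacle is showing that $\widehat H_n$ satisfies every identity of $\U$ in at most $N<n$ variables. The idea is that an identity in fewer than $n$ variables cannot detect the degree-$n$ square-monomial diagonal. Concretely, $\widehat H_n$ lies in the bigroup variety $\LL_n$ defined by the structural laws and all $w_j$ with $j\ne n$. Any $N$-variable group word valid in $\LL$, evaluated in a bigroup of $\LL_n$, can produce $w_n$-type values only through $n$ independent square coefficients $s(g_i)$. By the multilinearisation of Proposition~\ref{cofinal}, with $s(gh)=s(g)+s(h)$ and $s^2=0$, $n$ distinct square factors require $n$ distinct variables. I would therefore show that in the relatively free $N$-generated bigroup of $\LL_n$ the $w_n$-values vanish, so this bigroup coincides with the $N$-generated free bigroup of $\LL$. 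Pure group reducts then agree on $N$-variable words. Granting this, $\widehat H_n\in\mathcal V_N$. But $v_n\in\Id(\U)$ fails in $\widehat H_n$ by \eqref{pure-witness}, so $\widehat H_n\notin\U$. This contradicts $\U=\mathcal V_N$, and $\U$ is not finitely based.
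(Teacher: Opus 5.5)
\textbf{The gap is in your first step, the claim $v_j\in\Id(\U)$, and the proof depends on it.} Without it you cannot conclude $\widehat H_n\notin\U$.

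\emph{The direct computation.} It misapplies the commutator formula. For $n=(u,v,c)$ the central coordinate of $[n^\sigma,n]$ is $u\sigma\mu(v)+u\mu(\sigma)\mu(v)=u(\sigma+\mu(\sigma))\mu(v)=0$, not $u\sigma\mu(v)+v\mu(\sigma)\mu(u)$. So the ``delicacy'' you met is an error, not an obstruction.

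\emph{The reduction via \eqref{K03exact}.} More seriously, you cannot reduce $v_j$ to a single diagonal this way. That formula is proved in $W/D_n$, whose kernel is generated by conjugates of one involution $z$, and where $[z^{a},z]=1$ for $a\notin A^2$. In $G_*$, by contrast, $[n^{a_1},n]$ has coordinate $ua_1s_1\mu(v)\ne0$. Computing ``after killing squares of kernel generators'' gives information about a quotient, not about $G_*$ or an arbitrary member of $\LL$. In $G_*$, with $x_i=a_i(u_i,v_i,c_i)$, the value of $v_j$ is $\sum_{i,l}u_i(g_{il}+\mu(g_{li}))\mu(v_l)$, involving three independent kernel inputs. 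A one-generator formula that holds only modulo pairs $h+h^{-1}$ does not force the cross coefficients to vanish. Your sentence ``the value of $v_j$ is $\ell$ times a $w_j$-value plus terms killed by the structural laws'' is exactly what needs proof.

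\emph{How to fill it.} A direct calculation does the job.
\begin{itemize}
\item The row coefficients of $[x_1,x_2,x_3]$ are $p_1=(a_2+[a_1,a_2])(1+a_3)$, $p_2=(1+a_1[a_1,a_2])(1+a_3)$ and $p_3=s_1s_2$.
\item Those of the inverse triple are $q_l=a_l^{-1}p_l(a_1^{-1},a_2^{-1},a_3^{-1})$.
\item The intervening commutators multiply by $e'=(1+a_1)s_1s_4\cdots s_j$.
\item Use four facts: $s_1$ is central, $s_1^2=0$, $s_1[a_1,a_2]=s_1$, and $a_1$ commutes with every $g\in A$ after multiplication by $s_1$. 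They give $p_2e'=p_3e'=e'\mu(q_2)=e'\mu(q_3)=0$.
\item The surviving coefficient is $g_{11}=p_1e'\mu(q_1)=\sigma(a_1+a_1^2)$, with $\sigma=s_1\cdots s_j$. Hence $g_{11}+\mu(g_{11})=\sigma a_1s_1=0$, so $v_j$ vanishes on $G_*$.
\end{itemize}

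\textbf{Your second step is correct, but write it out instead of ``granting'' it.} It is a genuinely different route from the paper's. The precise lemma is as follows. In any bigroup satisfying the structural laws, $s(g)$ is central and square-zero modulo the ideal $T$ generated by the $t(a,b)$, and $s(gh)\equiv s(g)+s(h)$. So a product of $n$ factors $s(g_i)$, taken from an acting group generated by fewer than $n$ elements, lies in $T$. Centrality of the $r$-values and of $N^2$ makes $d^{t}$ central for $t\in T$, so $w_n$ holds. Consequently, for $n>N$, every $N$-generated sub-bigroup of $\widehat H_n$ lies in $\LL$. By \eqref{reductid}, $\widehat H_n$ then satisfies every $N$-variable identity of $\U$.

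\textbf{Comparison with the paper.} The paper instead uses the limit property of $\LL$ to get $\bigcap_N\LL_N=\LL$, hence $\U=\bigcap_N\mathcal V_N$. From that, $v_j\in\Id(\U)$ comes for free, because each $v_j$ holds on a tail. Your bounded-variable argument avoids that second appeal to GK01. You still depend on GK01 through \eqref{reductid}, which rests on its Proposition~4. The cost is that you must verify $v_j\in\Id(G_*)$ by hand, as above.
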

\begin{proof}
For $N>12$ set $\LL_N=\var_{\rm bgp}\{\widehat H_n:n\ge N\}$. The varieties decrease. Every finite structural identity of $\LL$ holds at every stage by Lemma~\ref{sourcewitness}. Each fixed $w_j$ holds at all stages $N>j$. Thus $\LL_\infty=\bigcap_N\LL_N\subseteq\LL$.

If $\LL_\infty$ had a finite basis, Lemma~\ref{chaincompact} would give $\LL_N=\LL_\infty\subseteq\LL$ for some $N$. But $\widehat H_n\in\LL_N$ for every $n\ge N$, and it violates $w_n$, an identity of $\LL$. This is a contradiction. The limit property in Theorem~\ref{GK01exact} therefore forces $\LL_\infty=\LL$.

Let $\mathcal V_N=\var_{\rm grp}\{\widehat H_n:n\ge N\}$. A pure group word is an identity of $\mathcal V_N$ exactly when it is an identity of $\LL_N$. Restricting the increasing union of bigroup identity theories to pure words commutes with that union. Applying Lemma~\ref{chaincompact} in both signatures and \eqref{reductid}, we get
\[
\Id\Bigl(\bigcap_N\mathcal V_N\Bigr)
=\Bigl(\bigcup_N\Id(\LL_N)\Bigr)\cap\{\text{pure words}\}
=\Id(\U).
\]
Hence $\U=\bigcap_N\mathcal V_N$. Each fixed $v_j$, $j\ge3$, eventually holds throughout the tail by Lemma~\ref{witnessbridge}, so it is an identity of $\U$. If $\U$ were finitely based, another application of Lemma~\ref{chaincompact} would yield $\mathcal V_N=\U$. Choosing any $n\ge N$ contradicts the nontrivial evaluation of $v_n$ in \eqref{pure-witness}. This is the required ordinary-group non-finite basability.
\end{proof}

\section{The main theorem}\label{sec:mainproof}
\begin{theorem}\label{main}
Let $A$ and $R$ be the group and algebra defined in Section~\ref{sec:construction}, let $N(R)$ be the group on $R^3$ defined there, and put
\[
G_*=N(R)\rtimes A.
\]
Then $\U=\var(G_*)$ is a limit variety of groups. Moreover, $G_*$ is countable, $\U$ is locally finite, and
\[
\U\subseteq\mathfrak B_4\mathfrak B_2.
\]
In particular, every group in $\U$ has exponent dividing eight.
\end{theorem}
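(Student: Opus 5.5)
The proof assembles results already established; no new calculation is needed. First, the ancillary assertions. Countability of $G_*$, membership $G_*\in\mathfrak B_4\mathfrak B_2$, and local finiteness of $\U$ are all contained in Lemma~\ref{heisfacts}. The containment is re-proved in Proposition~\ref{cuts-specht}. Since $\mathfrak B_4\mathfrak B_2$ is a variety (an extension class defined by substituting words, as in the proof of Lemma~\ref{freeD}), containment of the generator gives $\U\subseteq\mathfrak B_4\mathfrak B_2$. Exponent dividing eight follows directly: for $g$ in a group of $\U$, $g^2$ lies in the $\mathfrak B_4$-normal subgroup, so $g^8=(g^2)^4=1$. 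This is also the identity \eqref{W1}.

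Second, non-finite basability of $\U$ is exactly Theorem~\ref{nonfinite}. Its proof combines three ingredients. The first is the witnesses $\widehat H_n$ of Lemma~\ref{sourcewitness} and the pure words $v_j$ from Lemma~\ref{witnessbridge}. The second is the identification $\Id(\U)=\Id(\LL)\cap\{\text{pure words}\}$ from Proposition~\ref{bigroup-generation}. The third is descending-chain compactness, Lemma~\ref{chaincompact}.

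Third, let $\mathcal X<\U$ be a proper subvariety. By Proposition~\ref{cofinal} there is a $k$ with $\mathcal X\subseteq\U_k$. By Proposition~\ref{cuts-specht}, $\U_k$ is Specht: for $k=0$ via the Gupta--Krasilnikov theorem (Theorem~\ref{GK99exact}), and for $k\ge1$ via \eqref{UkWk} and Theorem~\ref{Wspecht}. Hence $\mathcal X$, being a subvariety of a Specht variety, is finitely based. One small point needs checking here. The Specht property of $\U_k$ is defined with bases in the absolute sense. Theorem~\ref{Wspecht} already produces absolute bases, because it adjoins the finite system \eqref{W1}--\eqref{W6}. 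So "finitely based" for $\mathcal X$ is meant as an ordinary group variety, with no extra work required. Combining the second and third steps gives the limit property.

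I expect no genuine obstacle in this assembly, since the substantive work lies in Theorem~\ref{marked}, Proposition~\ref{cofinal} and Theorem~\ref{nonfinite}. The point to be careful about is the signature. All three ingredients must be statements about ordinary group identities, not bigroup identities. The relevant checks are already built in: Proposition~\ref{cofinal} uses only pure group words $B_c$ and $b_k$; Theorem~\ref{nonfinite} passes to the pure words $v_n$; and the Specht results are stated for group varieties. The final proof should therefore cite them in that form.
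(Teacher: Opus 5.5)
Your proposal is correct and follows the paper's proof essentially step for step. Countability, local finiteness and $\U\subseteq\mathfrak B_4\mathfrak B_2$ come from the construction and Proposition~\ref{cuts-specht}; non-finite basability comes from Theorem~\ref{nonfinite}; and the limit property comes from Proposition~\ref{cofinal} together with the Specht property of each $\U_k$. The only cosmetic differences are that you cite Lemma~\ref{heisfacts} for local finiteness where the paper re-derives it from $\U\subseteq\D\D$ via Schreier's theorem, and that you write out the exponent-eight deduction $g^8=(g^2)^4=1$, which the paper leaves implicit.
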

\begin{proof}[Proof of Theorem~\ref{main}]
The construction in Section~\ref{sec:construction} gives a countable group $G_*$. Since $N(R)$ and $A$ belong to $\D$, we have $\U\subseteq\D\D$. The variety $\D$ is locally finite. If a group in $\D\D$ is finitely generated, its quotient in $\D$ is finite, so its normal subgroup in $\D$ has finite index and is finitely generated by Schreier's theorem. That subgroup is consequently finite. Thus $\U$ is locally finite.

Proposition~\ref{cuts-specht} gives $\U\subseteq\mathfrak B_4\mathfrak B_2$, and Theorem~\ref{nonfinite} shows that $\U$ is not finitely based. Let $\mathcal X$ be a proper subvariety of $\U$. By Proposition~\ref{cofinal}, there is a $k\ge0$ such that $\mathcal X\subseteq\U_k$. Proposition~\ref{cuts-specht} states that $\U_k$ is Specht, so $\mathcal X$ has a finite identity basis. Hence $\U$ is a limit variety.
\end{proof}


\begin{thebibliography}{99}

\bibitem{GK99}
C. K. Gupta and A. N. Krasilnikov,
A solution of a problem of Plotkin and Vovsi and an application to varieties of groups,
\textit{J. Austral. Math. Soc. (Series A)} \textbf{67} (1999) 329--355,
\doi{10.1017/S1446788700002056}.

\bibitem{GK01}
C. K. Gupta and A. N. Krasilnikov,
A just non-finitely-based variety of bigroups,
\textit{Comm. Algebra} \textbf{29} (2001) 4011--4046,
\doi{10.1081/AGB-100105987}.

\bibitem{Higman}
G. Higman,
Ordering by divisibility in abstract algebras,
\textit{Proc. London Math. Soc. (3)} \textbf{2} (1952) 326--336,
\doi{10.1112/plms/s3-2.1.326}.

\bibitem{K03}
A. N. Krasilnikov,
A non-finitely based variety of centre-by-abelian-by-nilpotent groups of exponent 8,
\textit{J. London Math. Soc.} \textbf{68} (2003) 371--382,
\doi{10.1112/S0024610703004319}.

\end{thebibliography}
\end{document}